\date{}
\begin{document}

\centerline{}

\centerline {\Large{\bf Stability of dual $g$-fusion frames in Hilbert spaces }}

%% My definition
\newcommand{\mvec}[1]{\mbox{\bfseries\itshape #1}}
\centerline{}
\centerline{\textbf{Prasenjit Ghosh}}
\centerline{Department of Pure Mathematics, University of Calcutta,}
\centerline{35, Ballygunge Circular Road, Kolkata, 700019, West Bengal, India}
\centerline{e-mail: prasenjitpuremath@gmail.com}
\centerline{}
\centerline{\textbf{T. K. Samanta}}
\centerline{Department of Mathematics, Uluberia College,}
\centerline{Uluberia, Howrah, 711315,  West Bengal, India}
\centerline{e-mail: mumpu$_{-}$tapas5@yahoo.co.in}

\newtheorem{Theorem}{\quad Theorem}[section]

\newtheorem{definition}[Theorem]{\quad Definition}

\newtheorem{theorem}[Theorem]{\quad Theorem}

\newtheorem{remark}[Theorem]{\quad Remark}

\newtheorem{corollary}[Theorem]{\quad Corollary}

\newtheorem{note}[Theorem]{\quad Note}

\newtheorem{lemma}[Theorem]{\quad Lemma}

\newtheorem{example}[Theorem]{\quad Example}

\newtheorem{result}[Theorem]{\quad Result}
\newtheorem{conclusion}[Theorem]{\quad Conclusion}

\newtheorem{proposition}[Theorem]{\quad Proposition}

\begin{abstract}
\textbf{\emph{We give a characterization of K-g-fusion frames and discuss the stability of dual g-fusion frames.\,We also present a necessary and sufficient condition for a quotient operator to be bounded.}}
\end{abstract}
{\bf Keywords:}  \emph{g-fusion frame, K-g-fusion frame, stability of a frame, quotient operator.}

{2020 MSC:} \emph{\text{Primary} 42C15; Secondary 46B15, 46C07.}

%=====================================
\section{Introduction}
%=====================================
 
\smallskip\hspace{.6 cm} Frames in Hilbert spaces have many remarkable properties which make them very useful in processing of signals and images, filter bank theory, coding and communications, system modeling and many other fields.\;The notion of frame in Hilbert space was born in 1952 in the work of Duffin and Schaeffer \cite{Duffin} and their idea did not appear to make much general interest outside of non-harmonic Fourier series.\;Later on,
after some innovative work of Daubechies, Grossman, Meyer \cite{Daubechies}, the theory of frames began to be studied more widely.

The theory of frames has been generalized rapidly and various generalizations of frames in Hilbert spaces namely, \,$K$-frames, \,$G$-frames, fusion frames etc.\,have been introduced in recent times.\;$K$-frames in Hilbert spaces were introduced by L.\,Gavruta \cite{Laura} to study the atomic system relative to a bounded linear operator.\;Ramu and Johnson \cite{Ramu} obtained characterizations connecting \,$K$-frames and quotient operators.\;Sun \cite{Sun} introduced a \,$g$-frame and a \,$g$-Riesz basis in complex Hilbert space and discussed several properties of them.\;$g$-frames were also defined by Kaftal, Larson, Zhang (\cite{Kaftal}).\;Huang \cite{Hua} began to study \,$K$-$g$-frame by combining \,$K$-frame and \,$g$-frame.\;General frame theory of subspaces were introduced by P.\,Casazza and G.\,Kutynoik \cite{Kutyniok} as a natural generalization of the frame theory in Hilbert spaces.\;Fusion frames and \,$K$-frames are the special case of generalized frames.\;Construction of \,$K$-$g$-fusion frames and their dual were presented by Sadri and Rahimi \cite{Sadri} to generalize the theory of \,$K$-frame, fusion frame and \,$g$-frame.\;In the theorey of frames, the stability of a frames is very important concept.\;The stability of \,$g$-frames and their dual \,$g$-frames have been studied by W.\,Sun \cite{W} and proved that if two \,$g$-frames are closed to each other, so their dual \,$g$-frames are also closed to each other.

In this paper, we study the stability of dual\;$g$-fusion frames and see that dual\;$g$-fusion frames are stable under small perturbation.\;Also, we give a characterization of \,$K$-$g$-fusion frames and at the end, we establish that a quotient operator will be bounded if and only if a \,$g$-fusion frame becomes \,$U\,K$-$g$-fusion frame.

Throughout this paper,\;$H$\, is considered to be a separable Hilbert space with associated inner product \,$\left <\,\cdot \,,\, \cdot\,\right>$\, and \,$I_{H}$\; is the identity operator on \,$H$.\;We denote the collection of all bounded linear operators from \,$H_{\,1} \;\text{to}\; H_{\,2}$\; by \,$\mathcal{B}\,(\,H_{\,1},\, H_{\,2}\,)$, where \,$H_{\,1},\, H_{\,2}$\; are two Hilbert spaces.\;In particular \,$\mathcal{B}\,(\,H\,)$\; denote the space of all bounded linear operators on \,$H$.\;For \,$T \,\in\, \mathcal{B}\,(\,H\,)$, we denote \,$\mathcal{N}\,(\,T\,)$\; and \,$\mathcal{R}\,(\,T\,)$\; for null space and range of \,$T$, respectively.\;Also \;$P_{\,V} \,\in\, \mathcal{B}\,(\,H\,)$\; is the orthogonal projection onto a closed subspace \,$V \,\subset\, H$.\,$I,\, J$\; will denote countable index sets and \,$\left\{\,H_{j}\,\right\}_{ j \,\in\, J}$\; is a sequence of Hilbert spaces.\;Define \,$l^{\,2}\left(\,\left\{\,H_{j}\,\right\}_{ j \,\in\, J}\,\right)$\; by
\[l^{\,2}\left(\,\left\{\,H_{j}\,\right\}_{ j \,\in\, J}\,\right) \;=\; \left \{\,\{\,f_{\,j}\,\}_{j \,\in\, J} \;:\; f_{\,j} \;\in\; H_{j}\,,\; \sum\limits_{\,j \,\in\, J}\, \left \|\,f_{\,j}\,\right \|^{\,2} \;<\; \infty \,\right\}\]
with inner product is given by 
\[\left<\,\{\,f_{\,j}\,\}_{\,j \,\in\, J} \,,\, \{\,g_{\,j}\,\}_{\,j \,\in\, J}\,\right> \;=\; \sum\limits_{\,j \,\in\, J}\, \left<\,f_{\,j} \,,\, g_{\,j}\,\right>_{\,H_{j}}.\]
Clearly \,$l^{\,2}\left(\,\left\{\,H_{j}\,\right\}_{ j \,\in\, J}\,\right)$\; is a Hilbert space with the pointwise operations \cite{Sadri}.

%=====================================
\section{Preliminaries}
%=====================================

\smallskip\hspace{.6 cm} In this section, we briefly recall some necessary definitions and results that will be needed later.

\begin{theorem}\cite{Gavruta}\label{th1}
Let \,$T$\, be a bounded linear operator on \,$H$\, and $V$\; be a closed subspace of \,$H$.\;Then \,$P_{\,V}\, T^{\,\ast} \,=\, P_{\,V}\,T^{\,\ast}\, P_{\,\overline{T\,V}}$.\;Moreover, if \,$T$\; is an unitary operator then \,$P_{\,\overline{T\,V}}\;T \,=\, T\,P_{\,V}$.
\end{theorem}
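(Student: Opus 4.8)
The plan is to verify both identities by acting on an arbitrary vector $f \in H$ and exploiting the orthogonal decomposition of $f$ relative to the closed subspace $\overline{TV}$, together with the defining adjoint relation $\langle T^{\ast} g, v \rangle = \langle g, T v \rangle$.

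For the first identity, I would write $f = P_{\overline{TV}} f + (I_{H} - P_{\overline{TV}}) f$ and set $g := (I_{H} - P_{\overline{TV}}) f \in (\overline{TV})^{\perp}$. The key step is to show that $T^{\ast}$ carries $(\overline{TV})^{\perp}$ into $V^{\perp}$. Since a subspace and its closure share the same orthogonal complement, $(\overline{TV})^{\perp} = (TV)^{\perp}$; hence for any $g \perp TV$ and any $v \in V$ we have $\langle T^{\ast} g, v \rangle = \langle g, T v \rangle = 0$, because $T v \in TV$. Thus $T^{\ast} g \in V^{\perp} = \mathcal{N}(P_{V})$, so $P_{V} T^{\ast} g = 0$. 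Applying $P_{V} T^{\ast}$ to the decomposition of $f$ then gives $P_{V} T^{\ast} f = P_{V} T^{\ast} P_{\overline{TV}} f$, and since $f$ was arbitrary the operator identity follows.

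For the \emph{moreover} part, assume $T$ is unitary. First I would note that $T$, being an isometric isomorphism, sends the closed subspace $V$ onto a closed subspace, so $\overline{TV} = TV$, and moreover $T(V^{\perp}) = (TV)^{\perp}$. The inclusion $T(V^{\perp}) \subseteq (TV)^{\perp}$ follows since $\langle T u, T v \rangle = \langle u, v \rangle = 0$ for $u \in V^{\perp}$ and $v \in V$, while the reverse inclusion uses surjectivity of $T$ together with $T^{\ast} = T^{-1}$: if $w \perp TV$ then $\langle T^{-1} w, v \rangle = \langle w, T v \rangle = 0$, so $T^{-1} w \in V^{\perp}$. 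Then for arbitrary $f \in H$ the splitting $T f = T P_{V} f + T(I_{H} - P_{V}) f$ places $T P_{V} f \in TV$ and $T(I_{H} - P_{V}) f \in (TV)^{\perp}$, so this is precisely the orthogonal decomposition of $T f$ along $TV \oplus (TV)^{\perp}$. Reading off the component in $TV$ yields $P_{TV}(T f) = T P_{V} f$, that is, $P_{\overline{TV}} T = T P_{V}$.

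I expect the only genuine subtlety to be the identification $T(V^{\perp}) = (TV)^{\perp}$ in the unitary case, where both inclusions must be checked and the hypothesis $T^{\ast} = T^{-1}$ is actually used; by contrast, the first identity is a short consequence of the adjoint relation and the equality $(\overline{TV})^{\perp} = (TV)^{\perp}$.
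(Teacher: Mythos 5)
The paper itself contains no proof of Theorem \ref{th1}: it is imported verbatim from \cite{Gavruta}, so there is no in-text argument to compare yours with, and your proposal must be judged on its own merits. So judged, it is correct and complete. For the first identity, the chain $(\overline{TV})^{\perp} = (TV)^{\perp}$, then $\left<\,T^{\ast}g \,,\, v\,\right> = \left<\,g \,,\, Tv\,\right> = 0$ for $g \in (TV)^{\perp}$, $v \in V$, then $T^{\ast}g \in V^{\perp} = \mathcal{N}\,(\,P_{V}\,)$, combined with the decomposition $f = P_{\overline{TV}}f + (I_{H} - P_{\overline{TV}})f$, is airtight. It is worth noting that an even shorter, equivalent route (and essentially the one in \cite{Gavruta}) is to take adjoints: since $P_{V}$ and $P_{\overline{TV}}$ are self-adjoint, the identity $P_{V}T^{\ast} = P_{V}T^{\ast}P_{\overline{TV}}$ is equivalent to $P_{\overline{TV}}\,T\,P_{V} = T\,P_{V}$, which is immediate because $T P_{V} f \in TV \subseteq \overline{TV}$ and $P_{\overline{TV}}$ acts as the identity on $\overline{TV}$; your orthocomplement computation is the same fact read from the other side of the inner product, so the difference is cosmetic. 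For the \emph{moreover} part your structure is the right one, and you correctly isolate the genuine content: the equality $T(V^{\perp}) = (TV)^{\perp}$, for which you verify both inclusions, using the isometry property for one and $T^{\ast} = T^{-1}$ for the other, before reading off $P_{TV}(Tf) = TP_{V}f$ from uniqueness of the orthogonal decomposition along $TV \oplus (TV)^{\perp}$. Note that here, unlike in the first identity, the adjoint trick alone would not suffice (it yields only $P_{\overline{TV}}TP_{V} = TP_{V}$, and one still needs $P_{\overline{TV}}T(I_{H} - P_{V}) = 0$, which is exactly the inclusion $T(V^{\perp}) \subseteq (TV)^{\perp}$ that unitarity provides), so your extra work is necessary, not redundant. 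Your side remarks are also accurate: $\overline{TV} = TV$ holds because a unitary $T$ is a homeomorphism of $H$, and $(\overline{TV})^{\perp} = (TV)^{\perp}$ because a set and its closure have the same orthogonal complement.
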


\begin{theorem}\cite{Christensen}\label{th1.01}
The set \,$\mathcal{S}\,(\,H\,)$\; of all self-adjoint operators on \,$H$\; is a partially ordered set with respect to the partial order \,$\leq$\, which is defined as for \,$T,\,S \,\in\, \mathcal{S}\,(\,H\,)$ 
\[ T \,\leq\, S \,\Leftrightarrow\, \left<\,T\,f \,,\, f\,\right> \,\leq\, \left<\,S\,f \,,\, f\,\right>\; \;\forall\; f \,\in\, H.\] 
\end{theorem}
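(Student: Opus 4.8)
The plan is to verify directly that the relation $\leq$ satisfies the three defining axioms of a partial order on $\mathcal{S}\,(\,H\,)$: reflexivity, antisymmetry and transitivity. Before doing so I would record the basic observation that makes the definition meaningful, namely that for a self-adjoint $T$ the quantity $\left<\,T\,f \,,\, f\,\right>$ is always real: indeed $\overline{\left<\,T\,f \,,\, f\,\right>} \,=\, \left<\,f \,,\, T\,f\,\right> \,=\, \left<\,T^{\,\ast}\,f \,,\, f\,\right> \,=\, \left<\,T\,f \,,\, f\,\right>$, using $T \,=\, T^{\,\ast}$. Hence the inequality $\left<\,T\,f \,,\, f\,\right> \,\leq\, \left<\,S\,f \,,\, f\,\right>$ is a genuine comparison of real numbers, and the relation is well posed.

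Reflexivity and transitivity would then follow immediately by transporting the corresponding properties of the usual order on $\mathbb{R}$. For reflexivity, $\left<\,T\,f \,,\, f\,\right> \,\leq\, \left<\,T\,f \,,\, f\,\right>$ holds for every $f$, giving $T \,\leq\, T$. For transitivity, if $T \,\leq\, S$ and $S \,\leq\, R$ then for every $f$ we have the chain $\left<\,T\,f \,,\, f\,\right> \,\leq\, \left<\,S\,f \,,\, f\,\right> \,\leq\, \left<\,R\,f \,,\, f\,\right>$, so $\left<\,T\,f \,,\, f\,\right> \,\leq\, \left<\,R\,f \,,\, f\,\right>$ for all $f$ and hence $T \,\leq\, R$. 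Neither of these requires anything beyond the order axioms on the reals.

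The only step requiring real work is antisymmetry. Suppose $T \,\leq\, S$ and $S \,\leq\, T$; combining the two inequalities gives $\left<\,(\,T \,-\, S\,)\,f \,,\, f\,\right> \,=\, 0$ for every $f \,\in\, H$. The key point is the standard fact that on a complex Hilbert space an operator $A$ with $\left<\,A\,f \,,\, f\,\right> \,=\, 0$ for all $f$ must be the zero operator, which I would prove by the polarization identity: expanding $\left<\,A\,(\,f \,+\, g\,) \,,\, f \,+\, g\,\right>$, $\left<\,A\,(\,f \,-\, g\,) \,,\, f \,-\, g\,\right>$ and the two analogous expressions with $f \,\pm\, i\,g$ recovers $4\,\left<\,A\,f \,,\, g\,\right>$ as a linear combination of quantities that are all zero by hypothesis, whence $\left<\,A\,f \,,\, g\,\right> \,=\, 0$ for all $f,\,g$ and therefore $A \,=\, 0$. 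Applying this to $A \,=\, T \,-\, S$ yields $T \,=\, S$, completing the verification.

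I expect this antisymmetry step, specifically the passage from the vanishing of the quadratic form $f \,\mapsto\, \left<\,(\,T \,-\, S\,)\,f \,,\, f\,\right>$ to the vanishing of the operator $T \,-\, S$ itself, to be the main obstacle, since it is the one place where the argument genuinely uses the structure of the complex Hilbert space rather than just the order on $\mathbb{R}$; the complex polarization identity is precisely the tool that clears it.
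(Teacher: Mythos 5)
Your proof is correct, but note that the paper does not actually prove this statement at all: it is quoted as a known preliminary with a citation to Christensen's book, so there is no internal argument to compare against. Your verification --- realness of \( \left<\,T\,f \,,\, f\,\right> \) for self-adjoint \(T\), reflexivity and transitivity inherited pointwise from the order on \(\mathbb{R}\), and antisymmetry via the complex polarization identity applied to \(A \,=\, T \,-\, S\) (where, since \(T \,-\, S\) is self-adjoint, even the real-scalar case would follow by polarizing the symmetric form) --- is precisely the standard textbook proof and is complete.
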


\begin{definition}{\cite{Ramu}}
Let \,$U,\, V \,\in\, \mathcal{B}\,(\,H\,)$\; with \,$\mathcal{N}\,(\,V\,) \,\subset\, \mathcal{N}\,(\,U\,)$.\;Then the linear operator \,$T \,=\, [\;U \,/\, V\;] \,:\, \mathcal{R}\,(\,V\,) \,\rightarrow\, \mathcal{R}\,(\,U\,)$, defined by \,$T\,(\,V\,f\,) \;=\; U\,f,\; f \,\in\, H$\, is called quotient operator on \,$H$.\;It can be verify that \,$\mathcal{R}\,(\,T\,) \,\subset\, \mathcal{R}\,(\,U\,)$\, and \,$T\,V \,=\, U$. 
\end{definition}

\begin{definition}\cite{Christensen}
A frame for \,$H$\; is a sequence \,$\left\{\,f_{\,j}\,\right\}_{j \,\in\, J} \,\subseteq\, H$\;  such that
\[ A\; \|\,f\,\|^{\,2} \;\leq\; \;\sum\limits_{j \,\in\, J}\;  \left |\, \left <\,f \;,\; f_{\,j} \, \right >\, \right |^{\,2} \;\leq\; B \;\|\,f\,\|^{\,2}\; \;\forall\; f \;\in\; H \]
for some positive constants \,$A,\, B$.\,The constants \,$A$\, and \,$B$\, are called frame bounds.
\end{definition}

\begin{definition}\cite{Kutyniok}
Let \,$\left\{\,W_{j}\,\right\}_{ j \,\in\, J}$\, be a collection of closed subspaces of \,$H$\, and \,$\left\{\,v_{j}\,\right\}_{ j \,\in\, J}$\, be a collection of positive weights.\;A fusion frame for \,$H$\; is a family of weighted closed subspaces \,$\left\{\, (\,W_{j},\, v_{j}\,) \,:\, j \;\in\; J\,\right\}$\; such that
\[A \;\left \|\, f \,\right \|^{\,2} \;\leq\; \sum\limits_{\,j \,\in\, J}\, v_{j}^{\,2} \;\left\|\, P_{\,W_{j}}\,(\,f\,) \,\right\|^{\,2} \;\leq\; B \; \left\|\, f \, \right\|^{\,2}\; \;\forall\; f \;\in\; H\]
for some \,$0 \,<\, A \, \leq\, B \,<\, \infty$.\;The constants \,$A ,\, B$\; are called fusion frame bounds.
\end{definition}

\begin{definition}\cite{Kaftal,Sun}
A generalized frame or g-frame for \,$H$\, with respect to \,$\left\{\,H_{j}\,\right\}_{j \,\in\, J}$\, is a sequence of operators \,$\left\{\,\Lambda_{j} \,\in\, \mathcal{B}\,(\,H,\, H_{j}\,) \,:\, j \,\in\, J\,\right\}$\; such that
\[A \;\left \|\, f \,\right \|^{\,2} \;\leq\; \sum\limits_{\,j \,\in\, J}\, \left\|\,\Lambda_{j}\,f \,\right\|^{\,2} \;\leq\; B \; \left\|\, f \, \right\|^{\,2}\;\; \;\forall\; f \;\in\; H\]
for some positive constants \,$A$\; and \,$B$.\;The constants \,$A$\; and \,$B$\; are called the lower and upper bounds, respectively.
\end{definition}

\begin{definition}\cite{Ahmadi}
Let \,$\left\{\,W_{j}\,\right\}_{ j \,\in\, J}$\; be collection of closed subspaces of \,$H$\; and \,$\left\{\,v_{j}\,\right\}_{ j \,\in\, J}$\; be a collection of positive weights and let \,$\Lambda_{j} \,\in\, \mathcal{B}\,(\,H,\, H_{j}\,)$\; for each \,$j \,\in\, J$.\;A generalized fusion frame or a g-fusion frame for \,$H$\, with respect to \,$\left\{\,H_{j}\,\right\}_{j \,\in\, J}$\, is a family of the form \,$\Lambda \,=\, \left\{\,\left(\,W_{j},\, \Lambda_{j},\, v_{j}\,\right)\,\right\}_{j \,\in\, J}$\; such that
\begin{equation}\label{eq1}
A \;\left \|\,f \,\right \|^{\,2} \;\leq\; \sum\limits_{\,j \,\in\, J}\,v_{j}^{\,2}\, \left\|\,\Lambda_{j}\,P_{\,W_{j}}\,(\,f\,) \,\right\|^{\,2} \;\leq\; B \; \left\|\, f \, \right\|^{\,2}\; \;\forall\; f \;\in\; H,
\end{equation}
for some constants \,$0 \,<\, A \,\leq\, B \,<\, \infty$.\;The constants \,$A$\; and \,$B$\; are called the lower and upper bounds of g-fusion frame, respectively.\;If \,$A \,=\, B$\, then \,$\Lambda$\; is called tight g-fusion frame and if \,$A \,=\, B \,=\,  1$\; then we say \,$\Lambda$\; is a Parseval g-fusion frame.\;If the family \,$\Lambda$\; satisfies
\[\sum\limits_{\,j \,\in\, J}\,v_{j}^{\,2}\, \left\|\,\Lambda_{j}\,P_{\,W_{j}}\,(\,f\,) \,\right\|^{\,2} \;\leq\; B \; \left\|\, f \, \right\|^{\,2}\; \;\forall\; f \;\in\; H\]then it is called a g-fusion Bessel sequence for \,$H$\, with a bound \,$B$. 
\end{definition}

\begin{definition}\cite{Ahmadi}
Let \,$\Lambda \,=\, \left\{\,\left(\,W_{j},\, \Lambda_{j},\, v_{j}\,\right)\,\right\}_{j \,\in\, J}$\, be a g-fusion Bessel sequence for \,$H$.\;Then the operator \,$ T_{\,\Lambda} \;:\; l^{\,2}\left(\,\left\{\,H_{j}\,\right\}_{ j \,\in\, J}\,\right) \,\to\, H$\, defined by
\[T_{\Lambda}\,\left(\,\left\{\,f_{\,j}\,\right\}_{j \,\in\, J}\,\right) \,=\,  \sum\limits_{\,j \,\in\, J}\,v_{j}\, P_{\,W_{j}}\,\Lambda_{j}^{\,\ast}\,f_{\,j}\;\; \;\forall\; \{\,f_{\,j}\,\}_{j \,\in\, J} \,\in\, l^{\,2}\left(\,\left\{\,H_{j}\,\right\}_{ j \,\in\, J}\,\right)\]is called synthesis operator and the operator \,$T_{\,\Lambda}^{\,\ast} \,:\, H \,\to\, l^{\,2}\left(\,\left\{\,H_{j}\,\right\}_{ j \,\in\, J}\,\right)$\, defined by
\[T_{\,\Lambda}^{\,\ast}\,(\,f\,) \;=\;  \left\{\,v_{j}\,\Lambda_{j}\,P_{\,W_{j}}\,(\,f\,)\,\right\}_{ j \,\in\, J}\; \forall\; f \;\in\; H\]is called analysis operator.\;The operator \,$S_{\Lambda} \;:\; H \;\to\; H$\, defined by
\begin{equation}\label{eq1.1}
S_{\Lambda}\,f \;=\; T_{\,\Lambda}\,T_{\,\Lambda}^{\,\ast}\,f \;=\; \sum\limits_{\,j \,\in\, J}\,v_{j}^{\,2}\, P_{\,W_{j}}\,\Lambda_{j}^{\,\ast}\, \Lambda_{j}\, P_{\,W_{j}}\,(\,f\,)\; \;\forall\; f \;\in\; H
\end{equation}
is called g-fusion frame operator.\;It can be easily verify that
\[\left<\,S_{\Lambda}\,f \,,\, f\,\right> \,=\, \sum\limits_{\,j \,\in\, J}\,v_{j}^{\,2}\, \left\|\,\Lambda_{j}\,P_{\,W_{j}}\,(\,f\,) \,\right\|^{\,2}\; \;\forall\; f \;\in\; H.\] 
Furthermore, if \,$\Lambda$\, is a g-fusion frame with bounds \,$A$\, and \,$B$, then from (\ref{eq1}),
\[\left<\,A\,f \,,\, f\,\right> \,\leq\, \left<\,S_{\Lambda}\,f \,,\, f\,\right> \,\leq\, \left<\,B\,f \,,\, f\,\right>\; \;\forall\; f \;\in\; H.\]
The operator \,$S_{\Lambda}$\; is bounded, self-adjoint, positive and invertible.\;Now, according to the Theorem (\ref{th1.01}), we can write, \,$A\,I_{\,H} \,\leq\,S_{\Lambda} \,\leq\, B\,I_{H}$\; and this gives \[ B^{\,-1}\,I_{H} \,\leq\, S_{\,\Lambda}^{\,-1} \,\leq\, A^{\,-1}\,I_{H}.\]
\end{definition}

\begin{theorem}\cite{Ahmadi}\label{th1.1}
$\Lambda \,=\, \left\{\,\left(\,W_{j},\, \Lambda_{j},\, v_{j}\,\right)\,\right\}_{j \,\in\, J}$\; is a g-fusion Bessel sequence for \,$H$\; with bound \,$B$\; if and only if the synthesis operator \,$T_{\Lambda}$\, is a well-defined and bounded with \,$\left\|\,T_{\Lambda}\,\right\| \,\leq\, \sqrt{B}$.
\end{theorem}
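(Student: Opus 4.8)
The plan is to exploit the fact that the synthesis operator $T_{\Lambda}$ and the analysis operator $T_{\Lambda}^{\,\ast}$ are mutually adjoint, so that the Bessel inequality (\ref{eq1}) is nothing but a restatement of the norm bound $\left\|\,T_{\Lambda}^{\,\ast}\,\right\| \le \sqrt{B}$, combined with the identity $\left\|\,T_{\Lambda}\,\right\| = \left\|\,T_{\Lambda}^{\,\ast}\,\right\|$. The only genuinely analytic point is to establish, in the forward direction, that the series defining $T_{\Lambda}$ actually converges; everything else reduces to a short duality computation. I would prove the two implications separately.

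For the forward direction, assume (\ref{eq1}) holds with upper bound $B$. First I would show that $\sum_{j \,\in\, J}\, v_{j}\, P_{\,W_{j}}\,\Lambda_{j}^{\,\ast}\,f_{\,j}$ converges for every $\{\,f_{\,j}\,\}_{j \,\in\, J} \,\in\, l^{\,2}\!\left(\,\left\{\,H_{j}\,\right\}_{ j \,\in\, J}\,\right)$. Fix a finite subset $J_{1} \subset J$ and, for an arbitrary $g \,\in\, H$, compute the pairing
\[ \left\langle\, \sum_{j \,\in\, J_{1}} v_{j}\, P_{\,W_{j}}\,\Lambda_{j}^{\,\ast}\,f_{\,j} \,,\, g \,\right\rangle \;=\; \sum_{j \,\in\, J_{1}} v_{j}\, \left\langle\, f_{\,j} \,,\, \Lambda_{j}\, P_{\,W_{j}}\, g \,\right\rangle, \]
using that $P_{\,W_{j}}$ is self-adjoint and $\Lambda_{j}^{\,\ast\ast} = \Lambda_{j}$. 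Applying the Cauchy--Schwarz inequality in $l^{\,2}$ and then the Bessel bound to the factor $\sum_{j \,\in\, J_{1}} v_{j}^{\,2}\,\left\|\,\Lambda_{j}\, P_{\,W_{j}}\, g\,\right\|^{\,2} \le B\,\|\,g\,\|^{\,2}$ yields
\[ \left\|\, \sum_{j \,\in\, J_{1}} v_{j}\, P_{\,W_{j}}\,\Lambda_{j}^{\,\ast}\,f_{\,j} \,\right\| \;\le\; \sqrt{B}\, \left(\, \sum_{j \,\in\, J_{1}} \|\,f_{\,j}\,\|^{\,2} \,\right)^{1/2}. \]
Applied to tails $J_{1}$, this estimate shows that the partial sums are Cauchy, so $T_{\Lambda}$ is well-defined; taking the supremum over all finite $J_{1}$ gives $\left\|\,T_{\Lambda}(\{\,f_{\,j}\,\})\,\right\| \le \sqrt{B}\,\left\|\,\{\,f_{\,j}\,\}\,\right\|$, that is, $\left\|\,T_{\Lambda}\,\right\| \le \sqrt{B}$.

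For the converse, suppose $T_{\Lambda}$ is well-defined and bounded with $\left\|\,T_{\Lambda}\,\right\| \le \sqrt{B}$. The same pairing computation, read in reverse, identifies the adjoint as $T_{\Lambda}^{\,\ast}\, g = \{\,v_{j}\,\Lambda_{j}\, P_{\,W_{j}}\, g\,\}_{j \,\in\, J}$, which is precisely the analysis operator. Since $\left\|\,T_{\Lambda}^{\,\ast}\,\right\| = \left\|\,T_{\Lambda}\,\right\| \le \sqrt{B}$, for every $f \,\in\, H$ we obtain
\[ \sum_{j \,\in\, J} v_{j}^{\,2}\,\left\|\,\Lambda_{j}\, P_{\,W_{j}}\, f\,\right\|^{\,2} \;=\; \left\|\,T_{\Lambda}^{\,\ast}\, f\,\right\|^{\,2} \;\le\; \left\|\,T_{\Lambda}^{\,\ast}\,\right\|^{\,2}\,\|\,f\,\|^{\,2} \;\le\; B\,\|\,f\,\|^{\,2}, \]
which is exactly the g-fusion Bessel condition with bound $B$.

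The main obstacle is the convergence/well-definedness step in the forward direction; once the duality estimate above is in place, both the convergence of the synthesis series and the norm bound $\left\|\,T_{\Lambda}\,\right\| \le \sqrt{B}$ fall out simultaneously, and the reverse implication is then purely formal.
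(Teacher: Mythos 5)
Your proof is correct, and there is in fact no in-paper proof to compare it against: the paper states this theorem in its preliminaries as a quoted result from \cite{Ahmadi} without supplying an argument. Your route is the standard one for such synthesis-operator characterizations --- the finite-subset duality estimate $\left\|\sum_{j \in J_{1}} v_{j}\,P_{W_{j}}\Lambda_{j}^{\ast}f_{j}\right\| \le \sqrt{B}\left(\sum_{j \in J_{1}}\|f_{j}\|^{2}\right)^{1/2}$ giving Cauchy tails and hence well-definedness plus the norm bound, and then the identification $T_{\Lambda}^{\ast}g = \{v_{j}\,\Lambda_{j}P_{W_{j}}g\}_{j \in J}$ (rigorously, by pairing against coordinate sequences) turning $\|T_{\Lambda}^{\ast}\| = \|T_{\Lambda}\| \le \sqrt{B}$ into the Bessel bound --- so it fills the gap correctly and in the expected way.
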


\begin{definition}\cite{Sadri}
Let \,$\left\{\,W_{j}\,\right\}_{ j \,\in\, J}$\; be collection of closed subspaces of \,$H$\; and \,$\left\{\,v_{j}\,\right\}_{ j \,\in\, J}$\; be a collection of positive weights and let \,$\Lambda_{j} \,\in\, \mathcal{B}\,(\,H,\, H_{j}\,)$\; for each \,$j \,\in\, J$\; and \,$K \;\in\; \mathcal{B}\,(\,H\,)$.\;Then K-g-fusion frame for \,$H$\, with respect to \,$\left\{\,H_{j}\,\right\}_{j \,\in\, J}$\, is a family of the form \,$\Lambda \,=\, \left\{\,\left(\,W_{j},\, \Lambda_{j},\, v_{j}\,\right)\,\right\}_{j \,\in\, J}$\; such that
\begin{equation}\label{eq2}
A \;\left \|\,K^{\,\ast}\,f \,\right \|^{\,2} \;\leq\; \sum\limits_{\,j \,\in\, J}\,v_{j}^{\,2}\, \left\|\,\Lambda_{j}\,P_{\,W_{j}}\,(\,f\,) \,\right\|^{\,2} \;\leq\; B \; \left\|\, f \, \right\|^{\,2}\; \;\forall\; f \;\in\; H
\end{equation}
for some constants \,$0 \,<\, A \;\leq\; B \,<\, \infty$.\;The constants \,$A$\; and \,$B$\; are called the lower and upper bounds of K-g-fusion frame, respectively.\;If \,$A \,=\, B$\; then  \,$\Lambda$\; is called a tight K-g-fusion frame.\;If \,$K \,=\, I_{H}$\, then \,$\Lambda$\; is a g-fusion frame and if \,$K \,=\, I_{H}\, \,\text{and}\, \,\Lambda_{j} \,=\, P_{\,W_{j}}$\; for any \,$j \,\in\, J$, then \,$\Lambda$\; is a fusion frame for \,$H$.
\end{definition}

%=====================================
\section{Some properties of \,$K$-$g$-fusion frames}
%=====================================

\begin{theorem}
Let \,$U \,\in\, \mathcal{B}\,(\,H\,)$\, be an invertible operator on \,$H$\, and \,$\Lambda \,=\, \left\{\,\left(\,W_{j},\, \Lambda_{j},\, v_{j}\,\right)\,\right\}_{j \,\in\, J}$\; be a \,$K$-$g$-fusion frame for \,$H$\; for some \,$K \,\in\, \mathcal{B}\,(\,H\,)$.\;Then \;$\Gamma \,=\, \left\{\,\left(\,U\,W_{j},\, \Lambda_{j}\,P_{\,W_{j}}\,U^{\,\ast},\, v_{j}\,\right)\,\right\}_{j \,\in\, J}$\; is a \,$U\,K\,U^{\,\ast}$-g-fusion frame for \,$H$. 
\end{theorem}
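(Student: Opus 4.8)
The plan is to reduce the frame inequalities for $\Gamma$ to those of $\Lambda$ by exploiting the invertibility of $U$ together with the projection identity recorded in Theorem \ref{th1}. First I would note that since $U$ is invertible it is a homeomorphism of $H$, so each $U\,W_{j}$ is a closed subspace and the orthogonal projection $P_{\,U\,W_{j}}$ is well defined; moreover each operator $\Lambda_{j}\,P_{\,W_{j}}\,U^{\,\ast} \,\in\, \mathcal{B}\,(\,H,\,H_{j}\,)$ as a composition of bounded maps, so $\Gamma$ is at least a legitimate candidate family.

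The crucial step is the identity $P_{\,W_{j}}\,U^{\,\ast} \,=\, P_{\,W_{j}}\,U^{\,\ast}\,P_{\,U\,W_{j}}$, which follows from Theorem \ref{th1} applied with $T \,=\, U$ and $V \,=\, W_{j}$, because $\overline{U\,W_{j}} \,=\, U\,W_{j}$ by closedness. Consequently, for every $f \,\in\, H$,
\[
(\Lambda_{j}\,P_{\,W_{j}}\,U^{\,\ast})\,P_{\,U\,W_{j}}\,f \;=\; \Lambda_{j}\,\bigl(P_{\,W_{j}}\,U^{\,\ast}\,P_{\,U\,W_{j}}\bigr)\,f \;=\; \Lambda_{j}\,P_{\,W_{j}}\,(\,U^{\,\ast}f\,),
\]
so that the frame sum for $\Gamma$ collapses to
\[
\sum_{j \,\in\, J} v_{j}^{\,2}\,\bigl\|\,(\Lambda_{j}\,P_{\,W_{j}}\,U^{\,\ast})\,P_{\,U\,W_{j}}\,f\,\bigr\|^{\,2} \;=\; \sum_{j \,\in\, J} v_{j}^{\,2}\,\bigl\|\,\Lambda_{j}\,P_{\,W_{j}}\,(\,U^{\,\ast}f\,)\,\bigr\|^{\,2}.
\]

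Next I would apply the defining inequalities (\ref{eq2}) of the $K$-$g$-fusion frame $\Lambda$ to the vector $U^{\,\ast}f \,\in\, H$, giving
\[
A\,\bigl\|\,K^{\,\ast}\,U^{\,\ast}f\,\bigr\|^{\,2} \;\leq\; \sum_{j \,\in\, J} v_{j}^{\,2}\,\bigl\|\,\Lambda_{j}\,P_{\,W_{j}}\,(\,U^{\,\ast}f\,)\,\bigr\|^{\,2} \;\leq\; B\,\bigl\|\,U^{\,\ast}f\,\bigr\|^{\,2}.
\]
For the upper bound I use $\|\,U^{\,\ast}f\,\| \,\leq\, \|\,U\,\|\,\|\,f\,\|$, yielding the Bessel bound $B\,\|\,U\,\|^{\,2}$. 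For the lower bound, since $(\,U\,K\,U^{\,\ast}\,)^{\,\ast} \,=\, U\,K^{\,\ast}\,U^{\,\ast}$, the estimate $\|\,U\,K^{\,\ast}\,U^{\,\ast}f\,\| \,\leq\, \|\,U\,\|\,\|\,K^{\,\ast}\,U^{\,\ast}f\,\|$ rearranges to $\|\,K^{\,\ast}\,U^{\,\ast}f\,\|^{\,2} \,\geq\, \|\,U\,\|^{\,-2}\,\|\,(\,U\,K\,U^{\,\ast}\,)^{\,\ast}f\,\|^{\,2}$. Combining these, $\Gamma$ is a $U\,K\,U^{\,\ast}$-$g$-fusion frame with bounds $A/\|\,U\,\|^{\,2}$ and $B\,\|\,U\,\|^{\,2}$.

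I expect the main obstacle to be conceptual rather than computational: recognizing that Theorem \ref{th1} is precisely the tool that converts the projection $P_{\,U\,W_{j}}$ onto the image subspace back into the original projection $P_{\,W_{j}}$, which is what makes the whole argument telescope into the known frame condition for $\Lambda$. Everything after that is routine operator-norm bookkeeping, where the only delicate point is that the lower frame bound must be controlled \emph{from below} by $\|\,U\,\|$, which forces the factor $1/\|\,U\,\|^{\,2}$ rather than $\|\,U\,\|^{\,2}$.
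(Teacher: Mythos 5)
Your proposal is correct and follows essentially the same route as the paper: both invoke Theorem (\ref{th1}) with $T = U$, $V = W_{j}$ (using closedness of $U\,W_{j}$ from invertibility of $U$) to collapse $\Lambda_{j}\,P_{\,W_{j}}\,U^{\,\ast}\,P_{\,U\,W_{j}}\,f$ to $\Lambda_{j}\,P_{\,W_{j}}\,(\,U^{\,\ast}f\,)$, then apply the $K$-$g$-fusion frame inequality (\ref{eq2}) at $U^{\,\ast}f$ and the estimate $\|\,U\,K^{\,\ast}\,U^{\,\ast}f\,\| \,\leq\, \|\,U\,\|\,\|\,K^{\,\ast}\,U^{\,\ast}f\,\|$ to obtain the same bounds $A/\|\,U\,\|^{\,2}$ and $B\,\|\,U\,\|^{\,2}$. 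No gaps; the argument matches the paper's proof step for step.
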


\begin{proof}
Since \,$\Lambda$\, is a \,$K$-$g$-fusion frame for \,$H$, \,$\;\exists\; A,\, B \,>\, 0$\, such that 
\begin{equation}\label{eq2.1}
A \;\left \|\,K^{\,\ast}\,f \,\right \|^{\,2} \;\leq\; \sum\limits_{\,j \,\in\, J}\,v_{j}^{\,2}\, \left\|\,\Lambda_{j}\,P_{\,W_{j}}\,(\,f\,)\,\right\|^{\,2} \;\leq\; B \; \left\|\,f\, \right\|^{\,2}\; \;\forall\; f \,\in\, H.
\end{equation}
Also, \,$U$\, is an invertible bounded linear operator on \,$H$, so for any \,$j \,\in\, J$, \,$U\,W_{j}$\\ is closed in \,$H$.\;Now, for each \,$f \,\in\, H$, using Theorem (\ref{th1}), we obtain
\[\sum\limits_{\,j \,\in\, J}\, v_{j}^{\,2}\, \left\|\,\Lambda_{j}\,P_{\,W_{j}}\,U^{\,\ast}\,P_{\,U\,W_{j}}\,(\,f\,)\,\right\|^{\,2} \,=\, \sum\limits_{\,j \,\in\, J}\, v_{j}^{\,2}\, \left\|\,\Lambda_{j}\,P_{\,W_{j}}\,U^{\,\ast}\,(\,f\,)\,\right\|^{\,2}\]
\[\leq\, B\; \left\|\,U^{\,\ast}\,f\,\right\|^{\,2} \,\leq\, B\, \|\,U\,\|^{\,2}\, \|\,f\,\|^{\,2}\; \;[\;\text{by (\ref{eq2.1})}\;].\]On the other hand,
\[\dfrac{A}{\|\,U\,\|^{\,2}}\, \left\|\,\left(\,U\,K\,U^{\,\ast}\,\right)^{\,\ast}\,f\,\right\|^{\,2} \,=\, \dfrac{A}{\|\,U\,\|^{\,2}}\, \left\|\,U\,K^{\,\ast}\,U^{\,\ast}\,f\,\right\|^{\,2} \,\leq\, A\, \left\|\,K^{\,\ast}\,U^{\,\ast}\,f\,\right\|^{\,2}\]
\[\hspace{3.8cm} \,\leq\, \sum\limits_{\,j \,\in\, J}\, v_{j}^{\,2}\, \left\|\,\Lambda_{j}\,P_{\,W_{j}}\,\left(\,U^{\,\ast}\,f\,\right)\,\right\|^{\,2}\; \;[\;\text{by (\ref{eq2.1})}\;]\]
\[\hspace{5.3cm}\,=\, \sum\limits_{\,j \,\in\, J}\, v_{j}^{\,2}\, \left\|\,\Lambda_{j}\,P_{\,W_{j}}\,U^{\,\ast}\,P_{\,U\,W_{j}}\,(\,f\,)\,\right\|^{\,2}\; \;\;\forall\; f \,\in\, H.\] 
Therefore, \,$\Gamma$\; is a \,$U\,K\,U^{\,\ast}$-$g$-fusion frame for \,$H$. 
\end{proof}

\begin{theorem}
Let \,$U$\, be an invertible bounded linear operator on \,$H$\, and \,$\Gamma \,=\, \left\{\,\left(\,U\,W_{j},\, \Lambda_{j}\,P_{\,W_{j}}\,U^{\,\ast},\, v_{j}\,\right)\,\right\}_{j \,\in\, J}$\; be a K-g-fusion frame for \,$H$\, for some \,$K \,\in\, \mathcal{B}\,(\,H\,)$. Then \,$\Lambda \,=\, \left\{\,\left(\,W_{j} ,\, \Lambda_{j},\, v_{j}\,\right)\,\right\}_{j \,\in\, J}$\, is a \,$U^{\,-\, 1}\,K\,U$-g-fusion frame for \,$H$.   
\end{theorem}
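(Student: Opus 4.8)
The plan is to read off the frame inequality satisfied by \,$\Gamma$\, and transport it to \,$\Lambda$\, via the invertible change of variable \,$f \,\mapsto\, \left(\,U^{\,\ast}\,\right)^{\,-1} f$; this is precisely the reverse of the manipulation carried out in the preceding theorem. Since \,$U$\, is invertible, each \,$U\,W_{j}$\, is closed, so \,$P_{\,\overline{U\,W_{j}}} \,=\, P_{\,U\,W_{j}}$, and Theorem (\ref{th1}), applied with \,$T \,=\, U$\, and \,$V \,=\, W_{j}$, yields \,$P_{\,W_{j}}\,U^{\,\ast}\,P_{\,U\,W_{j}} \,=\, P_{\,W_{j}}\,U^{\,\ast}$. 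Hence the hypothesis that \,$\Gamma$\, is a \,$K$-$g$-fusion frame with bounds \,$A,\, B$\, reads, for all \,$f \,\in\, H$,
\[A\, \left\|\,K^{\,\ast}\,f\,\right\|^{\,2} \;\leq\; \sum\limits_{\,j \,\in\, J}\, v_{j}^{\,2}\, \left\|\,\Lambda_{j}\,P_{\,W_{j}}\,U^{\,\ast}\,f\,\right\|^{\,2} \;\leq\; B\, \|\,f\,\|^{\,2}.\]

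First I would substitute \,$f \,=\, \left(\,U^{\,\ast}\,\right)^{\,-1} g \,=\, \left(\,U^{\,-1}\,\right)^{\,\ast} g$\, for an arbitrary \,$g \,\in\, H$; then \,$U^{\,\ast}\,f \,=\, g$, so the middle term becomes exactly \,$\sum_{j}\, v_{j}^{\,2}\, \left\|\,\Lambda_{j}\,P_{\,W_{j}}\,g\,\right\|^{\,2}$, which is the quantity appearing in the \,$g$-fusion-frame inequality for \,$\Lambda$. The upper estimate is then immediate, namely \,$\sum_{j}\, v_{j}^{\,2}\, \left\|\,\Lambda_{j}\,P_{\,W_{j}}\,g\,\right\|^{\,2} \,\leq\, B\, \left\|\,\left(\,U^{\,\ast}\,\right)^{\,-1} g\,\right\|^{\,2} \,\leq\, B\, \left\|\,U^{\,-1}\,\right\|^{\,2} \|\,g\,\|^{\,2}$, giving the Bessel bound \,$B\, \left\|\,U^{\,-1}\,\right\|^{\,2}$\, (which in particular shows the sum is finite).

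For the lower estimate I would identify the adjoint of the target operator, \,$\left(\,U^{\,-1}\,K\,U\,\right)^{\,\ast} \,=\, U^{\,\ast}\,K^{\,\ast}\,\left(\,U^{\,\ast}\,\right)^{\,-1}$, so that \,$\left(\,U^{\,-1}\,K\,U\,\right)^{\,\ast} g \,=\, U^{\,\ast}\,\bigl(\,K^{\,\ast}\,\left(\,U^{\,\ast}\,\right)^{\,-1} g\,\bigr)$. Taking norms and using \,$\|\,U^{\,\ast}\,\| \,=\, \|\,U\,\|$\, yields \,$\left\|\,\left(\,U^{\,-1}\,K\,U\,\right)^{\,\ast} g\,\right\| \,\leq\, \|\,U\,\|\, \left\|\,K^{\,\ast}\,\left(\,U^{\,\ast}\,\right)^{\,-1} g\,\right\|$, and hence \,$\left\|\,K^{\,\ast}\,\left(\,U^{\,\ast}\,\right)^{\,-1} g\,\right\|^{\,2} \,\geq\, \|\,U\,\|^{\,-2}\, \left\|\,\left(\,U^{\,-1}\,K\,U\,\right)^{\,\ast} g\,\right\|^{\,2}$. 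Combining this with the lower inequality \,$A\, \left\|\,K^{\,\ast}\,\left(\,U^{\,\ast}\,\right)^{\,-1} g\,\right\|^{\,2} \,\leq\, \sum_{j}\, v_{j}^{\,2}\, \left\|\,\Lambda_{j}\,P_{\,W_{j}}\,g\,\right\|^{\,2}$\, produces the lower bound \,$A\, \|\,U\,\|^{\,-2}$. Thus \,$\Lambda$\, is a \,$U^{\,-1}\,K\,U$-$g$-fusion frame with bounds \,$A\, \|\,U\,\|^{\,-2}$\, and \,$B\, \left\|\,U^{\,-1}\,\right\|^{\,2}$.

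The only genuinely delicate point is the bookkeeping of adjoints and operator norms in the lower bound, in particular correctly computing \,$\left(\,U^{\,-1}\,K\,U\,\right)^{\,\ast}$\, and routing the factor \,$\|\,U\,\|$\, to the correct side; everything else is the formal inverse of the previous proof. I would also note at the outset that the closedness of each \,$W_{j}$\, (and of each \,$U\,W_{j}$) is what legitimizes the use of the orthogonal projections and of Theorem (\ref{th1}).
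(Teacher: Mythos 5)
Your proposal is correct and follows essentially the same route as the paper: both use Theorem (\ref{th1}) to identify \,$\Lambda_{j}\,P_{\,W_{j}}\,U^{\,\ast}\,P_{\,U\,W_{j}}$\, with \,$\Lambda_{j}\,P_{\,W_{j}}\,U^{\,\ast}$, evaluate the \,$K$-$g$-fusion inequality for \,$\Gamma$\, at \,$\left(\,U^{\,\ast}\,\right)^{\,-1}f$, and handle the lower bound via \,$\left(\,U^{\,-1}\,K\,U\,\right)^{\,\ast} \,=\, U^{\,\ast}\,K^{\,\ast}\,\left(\,U^{\,-1}\,\right)^{\,\ast}$\, together with \,$\left\|\,U^{\,\ast}\,\right\| \,=\, \|\,U\,\|$. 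Your bounds \,$A\,\|\,U\,\|^{\,-2}$\, and \,$B\,\left\|\,U^{\,-1}\,\right\|^{\,2}$\, agree exactly with those obtained in the paper's proof.
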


\begin{proof}
Since \,$\Gamma$\, is a \,$K$-$g$-fusion frame for \,$H$, for all \,$f \,\in\, H, \;\exists\; A,\, B \,>\, 0$\, such that
\begin{equation}\label{eq2.2}
A \;\left \|\,K^{\,\ast}\,f \,\right \|^{\,2} \;\leq\; \sum\limits_{\,j \,\in\, J}\, v_{j}^{\,2}\, \left\|\,\Lambda_{j}\,P_{\,W_{j}}\,U^{\,\ast}\,P_{\,U\,W_{j}}\,(\,f\,)\,\right\|^{\,2} \,\leq\, B \; \left\|\,f\, \right\|^{\,2}.
\end{equation}
Now, for each \,$f \,\in\, H$, we have
\[\dfrac{A}{\|\,U\,\|^{\,2}}\,\left \|\,\left(\,U^{\,-\, 1}\,K\,U\,\right)^{\,\ast}\,f\,\right \|^{\,2} \,=\, \dfrac{A}{\|\,U\,\|^{\,2}}\,\left\|\,U^{\,\ast}\,K^{\,\ast}\,(\,U^{\,-\, 1}\,)^{\,\ast}\,f\,\right\|^{\,2}\]
\[ \,\leq\, A\;\left\|\,K^{\,\ast}\,\left(\,U^{\,-\, 1}\,\right)^{\,\ast}\,f\,\right\|^{\,2}\hspace{2.7cm}\]
\[\hspace{2.7cm} \leq\, \sum\limits_{\,j \,\in\, J}\, v_{j}^{\,2}\, \left\|\,\Lambda_{j}\,P_{\,W_{j}}\,U^{\,\ast}\,P_{\,U\,W_{j}}\,\left(\,\left(\,U^{\,-\, 1}\,\right)^{\,\ast}\,f\,\right)\,\right\|^{\,2}\; [\;\text{by (\ref{eq2.2})}\;]\]
\[\hspace{3.5cm} \,=\, \sum\limits_{\,j \,\in\, J}\, v_{j}^{\,2}\, \left\|\,\Lambda_{j}\,P_{\,W_{j}}\,\left(\,U^{\,\ast}\,\left(\,U^{\,-\, 1}\,\right)^{\,\ast}\,f\,\right)\,\right\|^{\,2}\; [\;\text{by Theorem (\ref{th1})}\;]\] 
\[ \,=\, \sum\limits_{\,j \,\in\, J}\, v_{j}^{\,2}\, \left\|\,\Lambda_{j}\,P_{\,W_{j}}\,(\,f\,)\,\right\|^{\,2}.\hspace{2cm}\]Also, for each \,$f \,\in\, H$, we have
\[\sum\limits_{\,j \,\in\, J}\, v_{j}^{\,2}\, \left\|\,\Lambda_{j}\,P_{\,W_{j}}\,(\,f\,)\,\right\|^{\,2} \,=\, \sum\limits_{\,j \,\in\, J}\, v_{j}^{\,2}\, \left\|\,\Lambda_{j}\,P_{\,W_{j}}\,\left(\,U^{\,\ast}\,\left(\,U^{\,-\, 1}\,\right)^{\,\ast}\,f\,\right)\,\right\|^{\,2}\]
\[\hspace{5cm}\,=\, \sum\limits_{\,j \,\in\, J}\, v_{j}^{\,2}\, \left\|\,\Lambda_{j}\,P_{\,W_{j}}\,U^{\,\ast}\,P_{\,U\,W_{j}}\,\left(\,\left(\,U^{\,-\, 1}\,\right)^{\,\ast}\,f\,\right)\,\right\|^{\,2}\]
\[\hspace{2.3cm}\leq\, B \; \left\|\,\left(\,U^{\,-\, 1}\,\right)^{\,\ast}\,f\, \right\|^{\,2}\; [\;\text{by (\ref{eq2.2})}\;]\]
\[\hspace{1cm}\,\leq\, B\; \left\|\,U^{\,-\, 1}\,\right\|^{\,2}\,\|\,f\,\|^{\,2}.\]
Thus, \,$\Lambda$\, is a \,$U^{\,-\, 1}\,K\,U$-$g$-fusion frame for \,$H$\, with bounds \;$\dfrac{A}{\|\,U\,\|^{\,2}}$\, and \,$B\; \left\|\,U^{\,-\, 1}\,\right\|^{\,2}$.         
\end{proof}

\begin{theorem}\label{th3}
Let \,$K$\, be an invertible bounded linear operator on \,$H$\, and \,$\Lambda \,=\, \left\{\,\left(\,W_{j},\, \Lambda_{j},\, v_{j}\,\right)\,\right\}_{j \,\in\, J}$\; be a g-fusion frame for \,$H$\; with frame bounds \,$A,\, B$\, and \,$S_{\Lambda}$\, be the associated g-fusion frame operator.\;Then \,$\left\{\,\left(\,K\,S_{\Lambda}^{\,-\,1}\,W_{j},\; \Lambda_{j}\,P_{\,W_{j}}\,S_{\Lambda}^{\,-\,1}\,K^{\,\ast},\; v_{j}\,\right)\,\right\}_{j \,\in\, J}$\; is a K-g-fusion frame for \,$H$\, with the corresponding g-fusion frame operator \,$K\,S_{\Lambda}^{\,-\,1}\,K^{\,\ast}$.  
\end{theorem}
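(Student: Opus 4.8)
The plan is to recognize $\Gamma$ as the image of the g-fusion frame $\Lambda$ under the single invertible operator $U := K\,S_{\Lambda}^{\,-1}$, and then to read off both the frame inequalities and the frame operator from one quadratic-form identity. First I would set $U = K\,S_{\Lambda}^{\,-1}$; since $K$ is invertible by hypothesis and $S_{\Lambda}$ (hence $S_{\Lambda}^{\,-1}$) is invertible, $U$ is invertible, so each $U\,W_{j} = K\,S_{\Lambda}^{\,-1}\,W_{j}$ is a genuine closed subspace of $H$ — this is precisely where invertibility of $K$ is used, as it guarantees $P_{U\,W_{j}}$ is well defined. Because $S_{\Lambda}^{\,-1}$ is self-adjoint, $U^{\,\ast} = S_{\Lambda}^{\,-1}\,K^{\,\ast}$, so the proposed family is exactly $\Gamma = \left\{\,\left(\,U\,W_{j},\, \Lambda_{j}\,P_{W_{j}}\,U^{\,\ast},\, v_{j}\,\right)\,\right\}_{j \in J}$, which has the shape of the families handled in the previous two theorems.

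Next I would apply Theorem \ref{th1} to $T = U$. Its first (non-unitary) part gives $P_{W_{j}}\,U^{\,\ast} = P_{W_{j}}\,U^{\,\ast}\,P_{\overline{U\,W_{j}}} = P_{W_{j}}\,U^{\,\ast}\,P_{U\,W_{j}}$, so the analysis terms of $\Gamma$ collapse to $\Lambda_{j}\,P_{W_{j}}\,U^{\,\ast}\,P_{U\,W_{j}}(f) = \Lambda_{j}\,P_{W_{j}}\,U^{\,\ast}(f)$. Hence the defining sum for $\Gamma$ becomes $\sum_{j} v_{j}^{\,2}\,\left\|\,\Lambda_{j}\,P_{W_{j}}\,(\,U^{\,\ast}f\,)\,\right\|^{\,2}$, which by the definition of $S_{\Lambda}$ equals $\left<\,S_{\Lambda}\,U^{\,\ast}f,\, U^{\,\ast}f\,\right> = \left<\,U\,S_{\Lambda}\,U^{\,\ast}f,\, f\,\right>$. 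A short computation using $S_{\Lambda}^{\,-1}\,S_{\Lambda} = I_{H}$ yields $U\,S_{\Lambda}\,U^{\,\ast} = K\,S_{\Lambda}^{\,-1}\,S_{\Lambda}\,S_{\Lambda}^{\,-1}\,K^{\,\ast} = K\,S_{\Lambda}^{\,-1}\,K^{\,\ast}$, which simultaneously identifies the candidate frame operator $S_{\Gamma} = K\,S_{\Lambda}^{\,-1}\,K^{\,\ast}$.

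Finally I would extract the bounds from the operator order $B^{\,-1}I_{H} \leq S_{\Lambda}^{\,-1} \leq A^{\,-1}I_{H}$ already recorded in the excerpt. Conjugation $X \mapsto K\,X\,K^{\,\ast}$ is order-preserving on self-adjoint operators, so $B^{\,-1}\,K\,K^{\,\ast} \leq K\,S_{\Lambda}^{\,-1}\,K^{\,\ast} \leq A^{\,-1}\,K\,K^{\,\ast}$; pairing with $f$ and using $\left<\,K\,K^{\,\ast}f,\, f\,\right> = \left\|\,K^{\,\ast}f\,\right\|^{\,2} \leq \|\,K\,\|^{\,2}\,\|\,f\,\|^{\,2}$ gives
\[ B^{\,-1}\,\left\|\,K^{\,\ast}f\,\right\|^{\,2} \;\leq\; \sum\limits_{\,j \,\in\, J}\, v_{j}^{\,2}\,\left\|\,\Lambda_{j}\,P_{W_{j}}\,U^{\,\ast}\,P_{U\,W_{j}}\,(\,f\,)\,\right\|^{\,2} \;\leq\; A^{\,-1}\,\|\,K\,\|^{\,2}\,\|\,f\,\|^{\,2}, \]
which is exactly inequality (\ref{eq2}) for $\Gamma$ with operator $K$ and bounds $B^{\,-1}$ and $A^{\,-1}\|\,K\,\|^{\,2}$. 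I expect no serious obstacle here: the only points needing care are invoking the correct non-unitary half of Theorem \ref{th1} (so that no unitarity of $U$ is assumed), confirming that each $U\,W_{j}$ is closed, and using that conjugation by $K$ preserves operator order — the step that converts the spectral bounds on $S_{\Lambda}^{\,-1}$ directly into the g-fusion frame bounds.
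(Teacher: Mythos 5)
Your proof is correct, and its skeleton coincides with the paper's: both set $U = K\,S_{\Lambda}^{\,-1}$ (the paper calls it $T$), both note invertibility makes each $K\,S_{\Lambda}^{\,-1}\,W_{j}$ closed, and both invoke the non-unitary part of Theorem (\ref{th1}) to collapse $\Lambda_{j}\,P_{W_{j}}\,U^{\,\ast}\,P_{U W_{j}}\,f$ to $\Lambda_{j}\,P_{W_{j}}\,(U^{\,\ast}f)$. Where you genuinely diverge is in how the bounds and the frame operator are extracted. The paper works with norm inequalities: it first proves $\|K^{\,\ast}f\|^{\,2} \leq B^{\,2}\,\|S_{\Lambda}^{\,-1}K^{\,\ast}f\|^{\,2}$ (its (\ref{eq2.3})), applies the frame inequality for $\Lambda$ at the point $U^{\,\ast}f$ to get bounds $A/B^{\,2}$ and $(B/A^{\,2})\,\|K\|^{\,2}$, and then identifies the frame operator by a separate term-by-term operator computation showing $S_{\Gamma} = T\,S_{\Lambda}\,T^{\,\ast} = K\,S_{\Lambda}^{\,-1}\,K^{\,\ast}$. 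You instead compress both tasks into the single quadratic-form identity $\sum_{j} v_{j}^{\,2}\,\|\Lambda_{j}P_{W_{j}}U^{\,\ast}P_{UW_{j}}f\|^{\,2} = \left<\,U S_{\Lambda} U^{\,\ast} f,\, f\,\right> = \left<\,K S_{\Lambda}^{\,-1} K^{\,\ast} f,\, f\,\right>$ and then conjugate the spectral estimate $B^{\,-1}I_{H} \leq S_{\Lambda}^{\,-1} \leq A^{\,-1}I_{H}$ by $K$. This buys two things: sharper bounds $B^{\,-1}$ and $A^{\,-1}\|K\|^{\,2}$ (sharper because $A \leq B$ gives $A/B^{\,2} \leq B^{\,-1}$ and $A^{\,-1} \leq B/A^{\,2}$), and a frame-operator identification with no operator algebra. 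One step you should spell out: to pass from the equality of quadratic forms $\left<\,S_{\Gamma}f,\, f\,\right> = \left<\,K S_{\Lambda}^{\,-1} K^{\,\ast} f,\, f\,\right>$ for all $f$ to the operator equality $S_{\Gamma} = K\,S_{\Lambda}^{\,-1}\,K^{\,\ast}$ you need polarization (immediate on a complex Hilbert space, or via self-adjointness of both operators); as written, ``simultaneously identifies'' elides this one-line argument. With that line added, your proof is complete and marginally stronger than the paper's.
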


\begin{proof}
Let \,$T \,=\, K\,S_{\Lambda}^{\,-\,1}$.\;Then \,$T$\, is invertible on \,$H$\, and \,$T^{\,\ast} \,=\, \left(\,K\,S_{\Lambda}^{\,-\,1}\,\right)^{\,\ast} \,=\, S_{\Lambda}^{\,-\,1}\,K^{\,\ast}$.\;For \,$f \,\in\, H$, we have 
\[\left\|\,K^{\,\ast}\,f\,\right\|^{\,2} \,=\, \left\|\,S_{\Lambda}\,S_{\Lambda}^{\,-\, 1}\,\,K^{\,\ast}\,f\,\right\|^{\,2} \,\leq\, \left\|\,S_{\Lambda}\,\right\|^{\,2}\,\left\|\,S_{\Lambda}^{\,-\,1}\,K^{\,\ast}\,f\,\right\|^{\,2}\]
\begin{equation}\label{eq2.3}
\hspace{5cm}\,\leq\, B^{\,2}\, \left\|\,S_{\Lambda}^{\,-\,1}\,K^{\,\ast}\,f\,\right\|^{\,2}.
\end{equation}
Now, for each \,$f \,\in\, H$, using Theorem (\ref{th1}), we get
\[\sum\limits_{\,j \,\in\, J}\,v_{j}^{\,2}\, \left\|\,\Lambda_{j}\,P_{\,W_{j}}\,T^{\,\ast}\,P_{\,T\,W_{j}}\,(\,f\,) \,\right\|^{\,2} \,=\, \sum\limits_{\,j \,\in\, J}\,v_{j}^{\,2}\, \left\|\,\Lambda_{j}\,P_{\,W_{j}}\,(\,T^{\,\ast}\,f\,)\,\right\|^{\,2}\]
\[\,=\, \sum\limits_{\,j \,\in\, J}\,v_{j}^{\,2}\, \left\|\,\Lambda_{j}\,P_{\,W_{j}}\,\left(\,S_{\Lambda}^{\,-\,1}\,K^{\,\ast}\,f\,\right)\,\right\|^{\,2}\]
\[\hspace{2.5cm}\leq\; B\; \left\|\,S_{\Lambda}^{\,-\,1}\,K^{\,\ast}\,f\,\right\|^{\,2}\; \;[\;\text{since}\; \,\Lambda \;\text{is a $g$-fusion frame}\;]\]
\[\leq\; B\; \|\,S_{\Lambda}^{\,-\,1}\,\|^{\,2}\; \left\|\,K^{\,\ast}\,f\,\right\|^{\,2}\hspace{1.8cm}\]
\[\hspace{3.9cm}\leq\; \dfrac{B}{A^{\,2}}\; \|\,K\,\|^{\,2}\;\|\,f\,\|^{\,2}\; \;[\;\text{using}\; \,B^{\,-1}\,I_{\,H} \,\leq\, S_{\Lambda}^{\,-\,1} \,\leq\, A^{\,-1}\,I_{\,H}\;].\] On the other hand,
\[\sum\limits_{\,j \,\in\, J}\,v_{j}^{\,2}\, \left\|\,\Lambda_{j}\,P_{\,W_{j}}\,T^{\,\ast}\,P_{\,T\,W_{j}}\,(\,f\,) \,\right\|^{\,2} \,=\, \sum\limits_{\,j \,\in\, J}\,v_{j}^{\,2}\, \left\|\,\Lambda_{j}\,P_{\,W_{j}}\,\left(\,S_{\Lambda}^{\,-\,1}\,K^{\,\ast}\,f\,\right)\,\right\|^{\,2}\hspace{5cm}\]
\[\hspace{5.81cm}\geq\; A\; \left\|\,S_{\Lambda}^{\,-\,1}\,K^{\,\ast}\,f\,\right\|^{\,2} \,\geq\, \dfrac{A}{B^{\,2}}\; \left\|\,K^{\,\ast}\,f\,\right\|^{\,2}\; \;[\;\text{by (\ref{eq2.3})}\;].\]
Thus, \,$\left\{\,\left(\,K\,S_{\Lambda}^{\,-\,1}\,W_{j},\; \Lambda_{j}\,P_{\,W_{j}}\,S_{\Lambda}^{\,-\,1}\,K^{\,\ast},\; v_{j}\,\right)\,\right\}_{j \,\in\, J}$\; is a \,$K$-$g$-fusion frame for \,$H$.\\Furthermore, for each \,$f \,\in\, H$, 
\[\sum\limits_{\,j \,\in\, J}\, v_{j}^{\,2}\; P_{\,T\,W_{j}}\, \left(\,\Lambda_{j}\, P_{\,W_{j}}\,T^{\,\ast} \,\right)^{\,\ast}\, \left(\,\Lambda_{j}\,P_{\,W_{j}}\,T^{\,\ast}\,\right)\, P_{\,T\,W_{j}}\,(\,f\,)\hspace{2cm}\]
\[\hspace{.9cm}=\; \sum\limits_{\,j \,\in\, J}\, v_{j}^{\,2}\; \left(\,P_{\,T\,W_{\,j}}\, T\, P_{\,W_{j}}\,\right)\, \Lambda_{j}^{\,\ast}\; \Lambda_{j}\, \left(\,P_{\,W_{j}}\, T^{\,\ast}\, P_{\,T\,W_{j}}\,\right)\,(\,f\,)\]
\[\hspace{1cm}=\; \sum\limits_{\,j \,\in\, J}\,v_{j}^{\,2}\,\left(\,P_{\,W_{j}}\,T^{\,\ast}\,P_{\,T\,W_{j}}\,\right)^{\,\ast}\,\Lambda_{j}^{\,\ast}\;\Lambda_{j}\,(\,P_{\,W_{j}}\,T^{\,\ast}\,P_{\,T\,W_{j}}\,)\,(\,f\,)\]
\[\hspace{2.6cm}=\; \sum\limits_{\,j \,\in\, J}\, v_{j}^{\,2}\,\left(\,P_{\,W_{j}}\, T^{\,\ast}\,\right)^{\,\ast}\, \Lambda_{j}^{\,\ast}\; \Lambda_{j}\, P_{\,W_{j}}\, T^{\,\ast}\,(\,f\,)\; \;[\;\text{using Theorem (\ref{th1})}\;]\]
\[\hspace{1cm}=\; \sum\limits_{\,j \,\in\, J}\, v_{j}^{\,2}\; T\, P_{\,W_{j}}\, \Lambda_{j}^{\,\ast}\; \Lambda_{j}\, P_{\,W_{j}}\, T^{\,\ast}\,f \,=\, T\, S_{\Lambda}\, T^{\,\ast}\,f\; \;[\;\text{by (\ref{eq1.1})}\;]\]
\[\hspace{1.1cm}\,=\, \left(\,K\,S_{\Lambda}^{\,-\,1}\,\right)\,S_{\Lambda}\,\left(\,S_{\Lambda}^{\,-\,1}\,K^{\,\ast}\,f\,\right) \,=\, K\,S_{\Lambda}^{\,-\,1}\,K^{\,\ast}\,f\; \;\;\forall\; f \,\in\, H.\]This implies that \,$K\,S_{\Lambda}^{\,-\,1}\,K^{\,\ast}$\, is the associated \,$g$-fusion frame operator.
\end{proof}

\begin{corollary}
Let \,$\Lambda \,=\, \left\{\,\left(\,W_{j},\, \Lambda_{j},\, v_{j}\,\right)\,\right\}_{j \,\in\, J}$\, be a g-fusion frame for \,$H$\, with g-fusion frame operator \,$S_{\Lambda}$.\;If \,$P_{V}$\; is the orthogonal projection onto closed subspace \,$V \,\subset\, H$\, then \,$\left\{\,\left(\,P_{V}\,S_{\Lambda}^{\,-\,1}\,W_{j},\; \Lambda_{j}\,P_{\,W_{j}}\,S_{\Lambda}^{\,-\,1}\,P_{V},\; v_{j}\,\right)\,\right\}_{j \,\in\, J}$\, is a \,$P_{V}$-g-fusion frame for \,$H$\, with the corresponding g-fusion frame operator \,$P_{V}\,S_{\Lambda}^{\,-\,1}\,P_{V}$.   
\end{corollary}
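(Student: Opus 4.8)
The plan is to specialize the argument of Theorem \ref{th3} to the choice $K = P_V$. Note that one cannot simply quote Theorem \ref{th3}, because its hypothesis requires $K$ to be invertible, whereas $P_V$ is a projection and is not invertible in general. So I would instead re-run the proof of Theorem \ref{th3} with $K = P_V$ and verify that invertibility of $K$ was never essential — only invertibility of $S_\Lambda$, together with $P_V^{\,*} = P_V$ and $\|P_V\| \le 1$. Set $T = P_V S_\Lambda^{-1}$; since $S_\Lambda^{-1}$ and $P_V$ are both self-adjoint, $T^* = S_\Lambda^{-1} P_V$. The operator $T$ is bounded but no longer invertible, so the subspaces $P_V S_\Lambda^{-1} W_j = T W_j$ need not be closed; accordingly I read each as its closure $\overline{T W_j}$, which is exactly the setting of Theorem \ref{th1}: for every bounded $T$ one has $P_{W_j} T^* = P_{W_j} T^* P_{\overline{T W_j}}$, with adjoint $P_{\overline{T W_j}} T P_{W_j} = T P_{W_j}$.

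For the upper (Bessel) bound I would use Theorem \ref{th1} and then the upper $g$-fusion frame bound of $\Lambda$ to write
\[
\sum_{j \in J} v_j^2 \left\| \Lambda_j P_{W_j} T^* P_{\overline{T W_j}}(f) \right\|^2 = \sum_{j \in J} v_j^2 \left\| \Lambda_j P_{W_j}\!\left( S_\Lambda^{-1} P_V f \right) \right\|^2 \le B \left\| S_\Lambda^{-1} P_V f \right\|^2 \le \frac{B}{A^2}\, \|f\|^2,
\]
where the last step uses $\|P_V\| \le 1$ and $\|S_\Lambda^{-1}\| \le A^{-1}$ (from $B^{-1} I_H \le S_\Lambda^{-1} \le A^{-1} I_H$). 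Invertibility of $K$ plays no role here.

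For the lower bound I first reproduce the estimate (\ref{eq2.3}) with $K$ replaced by $P_V$: since $S_\Lambda S_\Lambda^{-1} = I_H$ and $\|S_\Lambda\| \le B$,
\[
\|P_V f\|^2 = \left\| S_\Lambda S_\Lambda^{-1} P_V f \right\|^2 \le \|S_\Lambda\|^2 \left\| S_\Lambda^{-1} P_V f \right\|^2 \le B^2 \left\| S_\Lambda^{-1} P_V f \right\|^2,
\]
an inequality that, exactly as in Theorem \ref{th3}, uses only $S_\Lambda S_\Lambda^{-1} = I_H$ and not invertibility of $K$. Combining it with the lower bound of $\Lambda$ gives
\[
\sum_{j \in J} v_j^2 \left\| \Lambda_j P_{W_j} T^* P_{\overline{T W_j}}(f) \right\|^2 \ge A \left\| S_\Lambda^{-1} P_V f \right\|^2 \ge \frac{A}{B^2}\, \|P_V f\|^2 = \frac{A}{B^2}\, \left\| P_V^{\,*} f \right\|^2,
\]
which is precisely the lower $P_V$-$g$-fusion frame inequality, so the family is a $P_V$-$g$-fusion frame with bounds $A/B^2$ and $B/A^2$.

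Finally, for the frame operator I would run the same chain of identities as at the close of Theorem \ref{th3}: using Theorem \ref{th1} to pass the outer projections $P_{\overline{T W_j}}$ through the summands, the frame operator of the new family collapses to $T S_\Lambda T^*$, and
\[
T S_\Lambda T^* = \left( P_V S_\Lambda^{-1} \right) S_\Lambda \left( S_\Lambda^{-1} P_V \right) = P_V S_\Lambda^{-1} P_V,
\]
as claimed. The one point demanding care throughout is the loss of invertibility of $K = P_V$; I expect the main obstacle to be the non-closedness of the subspaces $P_V S_\Lambda^{-1} W_j$, which is exactly why every appeal must be to the general (non-unitary) form of Theorem \ref{th1} rather than to the invertibility exploited implicitly in Theorem \ref{th3}.
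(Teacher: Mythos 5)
Your proposal is correct, and at bottom it is the paper's own route: the paper's entire proof of this corollary is the single sentence that it ``directly follows from that of Theorem (\ref{th3}), by putting $K = P_{V}$.'' What you add, rightly, is the observation that this citation is not literally legitimate, since Theorem (\ref{th3}) hypothesizes an \emph{invertible} $K$ and $P_{V}$ is not invertible unless $V = H$; so one must re-run the proof rather than quote the theorem. Your audit of the argument is accurate on both counts where invertibility could have mattered: the analogue of inequality (\ref{eq2.3}) uses only $S_{\Lambda}\,S_{\Lambda}^{\,-1} = I_{H}$ and $\|S_{\Lambda}\| \,\leq\, B$, and the upper and lower frame estimates use only the frame bounds of $\Lambda$ together with $P_{V}^{\,\ast} = P_{V}$ and $\|P_{V}\| \,\leq\, 1$ (which in fact sharpens the paper's upper bound $\frac{B}{A^{2}}\,\|K\|^{2}$ to $\frac{B}{A^{2}}$). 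Your second point --- that $T = P_{V}\,S_{\Lambda}^{\,-1}$ is no longer bounded below, so the subspaces $P_{V}\,S_{\Lambda}^{\,-1}\,W_{j}$ need not be closed and must be read as closures $\overline{T\,W_{j}}$, with every appeal made to the general form $P_{W_{j}}\,T^{\,\ast} = P_{W_{j}}\,T^{\,\ast}\,P_{\,\overline{T\,W_{j}}}$ of Theorem (\ref{th1}) --- repairs a gap the paper passes over silently, since the corollary's statement (and the definition of a $g$-fusion frame) requires closed subspaces for the projections to be defined. The computation of the frame operator as $T\,S_{\Lambda}\,T^{\,\ast} = P_{V}\,S_{\Lambda}^{\,-1}\,P_{V}$ goes through verbatim under this reading. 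In short: same approach as the paper, executed with a correction the paper needed.
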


\begin{proof}
Proof of this Corollary directly follows from that of the Theorem (\ref{th3}), by putting \,$K \,=\, P_{V}$.
\end{proof}

\begin{theorem}\label{thm3}
Let \,$\Lambda \,=\, \left\{\,\left(\,W_{j},\, \Lambda_{j},\, v_{j}\,\right)\,\right\}_{j \,\in\, J}$\; be a K-g-fusion frame for \,$H$\; with bounds \,$A,\, B$\, and  for each \,$j \,\in\, J,\, T_{j} \,\in\, \mathcal{B}\,\left(\,H_{j}\,\right)$\; be invertible operator.\;Suppose  
\begin{equation}\label{eq2.4}
0 \,<\, m \;=\; \inf\limits_{j \,\in\, J}\,\dfrac{1}{\left\|\,T_{j}^{\,-\, 1}\,\right\|} \;\leq\; \sup\limits_{j \,\in\, J}\,\left\|\,T_{j}\,\right\| \,=\, M.
\end{equation}
If \;$T\,\in\, \mathcal{B}\,(\,H\,)$\; is an invertible operator on \,$H$\, with \;$K\,T \,=\, T\,K$\, then \;$\Gamma \,=\, \\ \left\{\,\left(\,T\,W_{j},\; T_{j}\,\Lambda_{j}\,P_{\,W_{j}}\,T^{\,\ast},\; v_{j}\,\right)\,\right\}_{j \,\in\, J}$\; is a K-g-fusion frame for \,$H$.
\end{theorem}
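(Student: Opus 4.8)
The plan is to verify the two defining inequalities of a $K$-$g$-fusion frame for $\Gamma$ directly, reducing everything to the known bounds for $\Lambda$. First I would record the structural simplification that drives the whole argument. Since $T$ is invertible, each $T\,W_{j}$ is a closed subspace of $H$, so $P_{\,\overline{T\,W_{j}}} \,=\, P_{\,T\,W_{j}}$, and Theorem (\ref{th1}) applied with the operator $T$ and the subspace $W_{j}$ gives $P_{\,W_{j}}\,T^{\,\ast} \,=\, P_{\,W_{j}}\,T^{\,\ast}\,P_{\,T\,W_{j}}$. Hence, for every $f \,\in\, H$,
\[
\sum_{j \,\in\, J} v_{j}^{\,2}\, \left\|\,T_{j}\,\Lambda_{j}\,P_{\,W_{j}}\,T^{\,\ast}\,P_{\,T\,W_{j}}\,(\,f\,)\,\right\|^{\,2} \,=\, \sum_{j \,\in\, J} v_{j}^{\,2}\, \left\|\,T_{j}\,\Lambda_{j}\,P_{\,W_{j}}\,T^{\,\ast}\,(\,f\,)\,\right\|^{\,2},
\]
so I only need to sandwich the right-hand sum between multiples of $\|\,K^{\,\ast}\,f\,\|^{\,2}$ and $\|\,f\,\|^{\,2}$.

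For the upper bound, I would use $\|\,T_{j}\,\| \,\leq\, M$ for all $j$ to pull the $T_{j}$ outside the norm, obtaining a bound $M^{\,2}\,\sum_{j} v_{j}^{\,2}\,\|\,\Lambda_{j}\,P_{\,W_{j}}\,T^{\,\ast}\,f\,\|^{\,2}$, then apply the upper bound $B$ of the $K$-$g$-fusion frame $\Lambda$ to the vector $T^{\,\ast}\,f$ and finish with $\|\,T^{\,\ast}\,f\,\|^{\,2} \,\leq\, \|\,T\,\|^{\,2}\,\|\,f\,\|^{\,2}$; this yields the upper bound $M^{\,2}\,B\,\|\,T\,\|^{\,2}$. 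For the lower bound I would use the invertibility of each $T_{j}$: from $m \,=\, \inf_{j} 1/\|\,T_{j}^{\,-1}\,\|$ one gets $\|\,T_{j}\,g\,\| \,\geq\, m\,\|\,g\,\|$ for all $g$, so that
\[
\sum_{j \,\in\, J} v_{j}^{\,2}\, \left\|\,T_{j}\,\Lambda_{j}\,P_{\,W_{j}}\,T^{\,\ast}\,f\,\right\|^{\,2} \,\geq\, m^{\,2}\, \sum_{j \,\in\, J} v_{j}^{\,2}\, \left\|\,\Lambda_{j}\,P_{\,W_{j}}\,T^{\,\ast}\,f\,\right\|^{\,2} \,\geq\, m^{\,2}\,A\,\|\,K^{\,\ast}\,T^{\,\ast}\,f\,\|^{\,2},
\]
the last step being the lower bound of $\Lambda$ evaluated at $T^{\,\ast}\,f$.

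The main obstacle, and the \emph{only} place where the hypothesis $K\,T \,=\, T\,K$ is needed, is converting $\|\,K^{\,\ast}\,T^{\,\ast}\,f\,\|$ into something controlled below by $\|\,K^{\,\ast}\,f\,\|$. Taking adjoints in $K\,T \,=\, T\,K$ gives $K^{\,\ast}\,T^{\,\ast} \,=\, T^{\,\ast}\,K^{\,\ast}$, hence $\|\,K^{\,\ast}\,T^{\,\ast}\,f\,\| \,=\, \|\,T^{\,\ast}\,K^{\,\ast}\,f\,\|$. Then, using $\|\,(\,T^{\,\ast}\,)^{\,-1}\,\| \,=\, \|\,T^{\,-1}\,\|$, I would write $\|\,K^{\,\ast}\,f\,\| \,=\, \|\,(\,T^{\,\ast}\,)^{\,-1}\,T^{\,\ast}\,K^{\,\ast}\,f\,\| \,\leq\, \|\,T^{\,-1}\,\|\,\|\,T^{\,\ast}\,K^{\,\ast}\,f\,\|$, which gives $\|\,K^{\,\ast}\,T^{\,\ast}\,f\,\| \,\geq\, \|\,K^{\,\ast}\,f\,\| \big/ \|\,T^{\,-1}\,\|$. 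Combining this with the displayed lower estimate produces the lower frame bound $m^{\,2}\,A \big/ \|\,T^{\,-1}\,\|^{\,2}$, and together with the upper bound $M^{\,2}\,B\,\|\,T\,\|^{\,2}$ this shows that $\Gamma$ is a $K$-$g$-fusion frame for $H$. I expect no difficulty beyond keeping careful track of which operator each norm estimate is applied to; the commutation step is the conceptual heart, since without it $\|\,K^{\,\ast}\,T^{\,\ast}\,f\,\|$ cannot be separated back into the factor $\|\,K^{\,\ast}\,f\,\|$ that the $K$-$g$-fusion frame inequality requires.
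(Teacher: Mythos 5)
Your proposal is correct and follows essentially the same route as the paper's own proof: the same reduction $P_{\,W_{j}}\,T^{\,\ast}\,P_{\,T\,W_{j}} \,=\, P_{\,W_{j}}\,T^{\,\ast}$ via Theorem (\ref{th1}), the same estimates $\left\|\,T_{j}\,g\,\right\| \,\geq\, m\,\|\,g\,\|$ and $\left\|\,T_{j}\,\right\| \,\leq\, M$, and the same commutation step $K^{\,\ast}\,T^{\,\ast} \,=\, T^{\,\ast}\,K^{\,\ast}$ combined with $\left\|\,K^{\,\ast}\,f\,\right\| \,\leq\, \left\|\,T^{\,-\,1}\,\right\|\,\left\|\,T^{\,\ast}\,K^{\,\ast}\,f\,\right\|$. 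You even arrive at the identical frame bounds $m^{\,2}\,A\,\left\|\,T^{\,-\,1}\,\right\|^{\,-\,2}$ and $M^{\,2}\,B\,\|\,T\,\|^{\,2}$, so there is nothing to add.
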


\begin{proof}
Since \,$T$\, and \,$T_{j}$\, (\,for each \,$j \,\in\, J$\,) are invertible, so
\begin{equation}\label{eq2.5}
\left\|\,K^{\,\ast}\,f\,\right\|^{\,2} \,=\, \left\|\,\left(\,T^{\,-\, 1}\,\right)^{\,\ast}\,T^{\,\ast}\,K^{\,\ast}\,f\,\right\|^{\,2} \,\leq\, \left\|\,T^{\,-\, 1}\,\right\|^{\,2}\,\left\|\,T^{\,\ast}\,K^{\,\ast}\,f\,\right\|^{\,2},\, \; \;\&
\end{equation}
\begin{equation}\label{eq2.51}
\|\,f\,\|^{\,2} \,=\, \left\|\,T^{\,-\, 1}_{j}\,T_{j}\,f\,\right\|^{\,2} \,\leq\, \left\|\,T^{\,-\, 1}_{j}\,\right\|^{\,2}\; \left\|\,T_{j}\,f\,\right\|^{\,2}.
\end{equation}  
By Theorem (\ref{th1}), for each \,$f \,\in\, H$, we have 
\[\sum\limits_{\,j \,\in\, J}\,v_{j}^{\,2}\, \left\|\,T_{j}\,\Lambda_{j}\,P_{\,W_{j}}\,T^{\,\ast} \,P_{\,T\,W_{j}}\,(\,f\,) \,\right\|^{\,2} \,=\, \sum\limits_{\,j \,\in\, J}\,v_{j}^{\,2}\, \left\|\,T_{j}\,\Lambda_{j}\,P_{\,W_{j}}\,(\,T^{\,\ast}\,f\,)\,\right\|^{\,2}\]
\[\geq\; \sum\limits_{\,j \,\in\, J}\,\dfrac{1}{\left\|\,T_{j}^{\,-\, 1}\,\right\|^{\,2}}\;v_{j}^{\,2}\; \left\|\,\Lambda_{j}\,P_{\,W_{j}}\,(\,T^{\,\ast}\,f\,) \,\right\|^{\,2}\; \;[\;\text{using (\ref{eq2.51})}\;]\] 
\[\hspace{1.1cm}\geq\;m^{\,2}\;\sum\limits_{\,j \,\in\, J}\,v_{j}^{\,2}\,\left\|\,\Lambda_{j}\,P_{\,W_{j}}\,(\,T^{\,\ast}\,f\,) \,\right\|^{\,2}\; \;[\;\text{using (\ref{eq2.4})}\;]\hspace{2cm}\]
\[\hspace{2cm}\geq\; m^{\,2}\; A\; \left\|\,K^{\,\ast}\,T^{\,\ast}\,f\,\right\|^{\,2}\; \;[\;\text{since}\; \,\Lambda \;\text{is $K$-$g$-fusion frame}\;]\hspace{2cm}\]
\[=\, m^{\,2}\; A\; \left\|\,T^{\,\ast}\,K^{\,\ast}\,f\,\right\|^{\,2}\; \;[\;\text{because}\; \;K\,T \,=\, T\,K\;]\hspace{1cm}\]
\[\hspace{.3cm} \,\geq\, m^{\,2}\, A\, \left\|\,T^{\,-\, 1}\,\right\|^{\,-\, 2}\, \|\,K^{\,\ast}\,f\,\|^{\,2}\; \;[\;\text{using (\ref{eq2.5})}\;].\hspace{2cm}\]
On the other hand, for each \,$f \,\in\, H$, we have 
\[\sum\limits_{\,j \,\in\, J}\,v_{j}^{\,2}\, \left\|\,T_{j}\,\Lambda_{j}\,P_{\,W_{j}}\,T^{\,\ast} \,P_{\,T\,W_{j}}\,(\,f\,) \,\right\|^{\,2} \,=\, \sum\limits_{\,j \,\in\, J}\,v_{j}^{\,2}\, \left\|\,T_{j}\,\Lambda_{j}\,P_{\,W_{j}}\,(\,T^{\,\ast}\,f\,)\,\right\|^{\,2}\] 
\[\hspace{1.4cm}\leq\; \sum\limits_{\,j \,\in\, J}\, \left\|\,T_{j}\,\right\|^{\,2}\, v_{j}^{\,2}\; \left\|\,\Lambda_{j}\,P_{\,W_{j}}\,(\,T^{\,\ast}\,f\,) \,\right\|^{\,2}\]
\[\,\leq\, M^{\,2}\; \sum\limits_{\,j \,\in\, J}\,v_{j}^{\,2}\,\left\|\,\Lambda_{j}\,P_{\,W_{j}}\,(\,T^{\,\ast}\,f\,) \,\right\|^{\,2}\; \;[\;\text{using (\ref{eq2.4})}\;]\] 
\[\hspace{1cm}\leq\; M^{\,2}\; B\, \|\,T\,\|^{\,2}\; \|\,f\,\|^{\,2}\; \;[\;\text{since}\; \,\Lambda \;\text{is $K$-$g$-fusion frame}\;].\]
Thus, \,$\Gamma$\; is a \,$K$-$g$-fusion frame with bounds \,$m^{\,2}\, A\, \left\|\,T^{\,-\, 1}\,\right\|^{\,-\, 2}$\; and \,$M^{\,2}\; B\, \|\,T\,\|^{\,2}$.
\end{proof}

\begin{remark}
We further notice that the g-fusion frame operator \,$S_{\Gamma}$\, of \,$\Gamma$\; satisfies the followings
\begin{itemize}
\item[(I)]By (\ref{eq1.1}),
\[S_{\Gamma}\,f \,=\, \sum\limits_{\,j \,\in\, J}\,v_{j}^{\,2}\,P_{\,T\,W_{j}}\,\left(\,T_{j}\,\Lambda_{j}\,P_{\,W_{j}}\,T^{\,\ast}\,\right)^{\,\ast}\,\left(\,T_{j}\,\Lambda_{j}\,P_{\,W_{j}}\,T^{\,\ast}\,\right)\,P_{\,T\,W_{j}}\,(\,f\,)\]
\[\hspace{1.2cm}=\, \sum\limits_{\,j \,\in\, J}\, v_{j}^{\,2}\, \left(\,P_{\,W_{j}}\,T^{\,\ast}\, P_{\,T\,W_{j}}\,\right)^{\,\ast}\, \Lambda^{\,\ast}_{j}\; T^{\,\ast}_{j}\; T_{j}\; \Lambda_{j}\, \left(\,P_{\,W_{j}}\,T^{\,\ast}\,P_{\,T\,W_{j}}\,\right)\,(\,f\,)\]
\[\hspace{1.7cm}=\, \sum\limits_{\,j \,\in\, J}\, v_{j}^{\,2}\, \left(\,P_{\,W_{j}}\,T^{\,\ast}\,\right)^{\,\ast}\, \Lambda^{\,\ast}_{j}\; T^{\,\ast}_{j}\; T_{j}\; \Lambda_{j}\, P_{\,W_{j}}\,T^{\,\ast}\,(\,f\,)\; \;[\;\text{by Theorem (\ref{th1})}\;]\]
\begin{equation}\label{eq2.6}
=\, \sum\limits_{\,j \,\in\, J}\, v_{j}^{\,2}\; T\, P_{\,W_{j}}\, \Lambda^{\,\ast}_{j}\; T^{\,\ast}_{j}\; T_{j}\, \Lambda_{j}\, P_{\,W_{j}}\,T^{\,\ast}\,(\,f\,).\hspace{2.3cm}
\end{equation}
\item[(II)]
Moreover, if \,$K \,=\, I_{H}$, i\,.\,e, if \,$\Lambda$\; is a g-fusion frame then \,$\Gamma$\; is also g-fusion frame.\;Then \,$S_{\Gamma}$\, is invertible on \,$H$\, and by Theorem (\ref{th1.01}), we can write 
\begin{equation}\label{eq2.7}
\dfrac{1}{M^{\,2}\, B\, \|\,T\,\|^{\,2}}\; I_{H} \,\leq\, S^{\,-\, 1}_{\Gamma} \,\leq\, \dfrac{1}{m^{\,2}\, A\, \left\|\,T^{\,-\, 1}\,\right\|^{\,-\, 2}}\; I_{H}.
\end{equation}
\end{itemize}  
\end{remark}

\begin{remark}
Let us now denote \,$U \,=\, T^{\,\ast}\,S^{\,-\, 1}_{\Gamma}\,T $\; and for each \,$j \,\in\, J,\, L_{j} \,=\, T^{\,\ast}_{j}\,T_{j}$\; and \,$\Delta_{j} \,=\, L_{j}\,\Lambda_{j}\,P_{\,W_{j}}\,U $, where \,$T,\, T_{j},\, \Lambda \;\text{and}\; \;\Gamma$\; are all as in the Theorem (\ref{thm3}).\;Now it is easy to verify the following:
\begin{itemize}
\item[(I)]\hspace{.2cm}$U \,\in\, \mathcal{B}\,(\,H\,)$,
\item[(II)]\hspace{.2cm}for all \,$j \,\in\, J,\, L_{j} \,\in\, \mathcal{B}\,(\,H_{j}\,)$\, and \,$\Delta_{j} \,\in\, \mathcal{B}\,(\,H,\, H_{j}\,)$.
\item[(III)]\hspace{.2cm} \,$U$\; and \,$L_{j}$\; are self-adjoint and invertible.
\item[(IV)]\hspace{.2cm}From (\ref{eq2.7}), it can be obtained
\begin{equation}\label{eq2.8}
\|\,U\,\| \,\leq\, \|\,T^{\,\ast}\,\|\, \left\|\,S^{\,-\, 1}_{\Gamma}\,\right\|\, \|\,T\,\| \,\leq\, \dfrac{\|\,T\,\|^{\,2}}{m^{\,2}\, A\, \left\|\,T^{\,-\, 1}\,\right\|^{\,-\, 2}}\,.
\end{equation}
\item[(V)]\hspace{.2cm}For each \,$j \,\in\, J$, using (\ref{eq2.4}), 
\begin{equation}\label{eq2.9}
\left\|\,L_{j}\,\right\| \,=\,  \left\|\,T^{\,\ast}_{j}\,T_{j}\,\right\| \,=\, \left\|\,T_{j}\,\right\|^{\,2} \,\leq\, M^{\,2}.
\end{equation}  
\end{itemize}
\end{remark}

\begin{theorem}
Let \,$\Lambda$\, be a g-fusion frame for \,$H$\, with bounds \,$A$\, and \,$B$.\;Then \,$\Delta \,=\, \left\{\,\left(\,T\,W_{j},\; \Delta_{j},\; v_{j}\,\right)\,\right\}_{j \,\in\, J}$\; is a g-fusion frame for \,$H$.\;Furthermore,
\[f \,=\, \sum\limits_{\,j \,\in\, J}\, v_{j}^{\,2}\, P_{\,W_{j}}\, \Lambda^{\,\ast}_{j}\, \Delta_{j}\,P_{\,T\,W_{j}}\,(\,f\,) \,=\, \sum\limits_{\,j \,\in\, J}\, v_{j}^{\,2}\, P_{\,T\,W_{j}}\, \Delta^{\,\ast}_{j}\, \Lambda_{j}\, P_{\,W_{j}}\,(\,f\,)\;\; \;\forall\; f \,\in\, H.\] 
\end{theorem}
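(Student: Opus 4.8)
The plan is to prove the two assertions in turn: first that $\Delta = \left\{\,\left(\,T\,W_{j},\, \Delta_{j},\, v_{j}\,\right)\,\right\}_{j \,\in\, J}$ is a g-fusion frame, and then the two reconstruction identities, which together say that $\Delta$ and $\Lambda$ are a dual pair. The whole point is that $\Delta_{j} \,=\, L_{j}\,\Lambda_{j}\,P_{\,W_{j}}\,U$ is obtained from $\Lambda_{j}$ by pre- and post-composing with the uniformly invertible factors $L_{j}$ (invertible with $\left\|\,L_{j}^{\,-1}\,\right\| \,\leq\, m^{\,-2}$ and $\left\|\,L_{j}\,\right\| \,\leq\, M^{\,2}$ by (\ref{eq2.4}), (\ref{eq2.9})) and the single invertible operator $U \,=\, T^{\,\ast}\,S_{\Gamma}^{\,-1}\,T$, while passing from the subspaces $W_{j}$ to the closed subspaces $T\,W_{j}$. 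I note at the outset that the second reconstruction identity is the adjoint of the first, since $\left(\,P_{\,W_{j}}\,\Lambda_{j}^{\,\ast}\,\Delta_{j}\,P_{\,T\,W_{j}}\,\right)^{\,\ast} \,=\, P_{\,T\,W_{j}}\,\Delta_{j}^{\,\ast}\,\Lambda_{j}\,P_{\,W_{j}}$, so it suffices to establish that the mixed operator $\sum_{j}\,v_{j}^{\,2}\,P_{\,W_{j}}\,\Lambda_{j}^{\,\ast}\,\Delta_{j}\,P_{\,T\,W_{j}}$ equals $I_{H}$.

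For the frame property I would estimate $\sum_{j}\,v_{j}^{\,2}\,\left\|\,\Delta_{j}\,P_{\,T\,W_{j}}\,f\,\right\|^{\,2} \,=\, \sum_{j}\,v_{j}^{\,2}\,\left\|\,L_{j}\,\Lambda_{j}\,P_{\,W_{j}}\,U\,P_{\,T\,W_{j}}\,f\,\right\|^{\,2}$ from above and below, exactly in the style of Theorem (\ref{thm3}). The upper bound is routine: pull out $\left\|\,L_{j}\,\right\| \,\leq\, M^{\,2}$ via (\ref{eq2.9}), bound $\|\,U\,\|$ by (\ref{eq2.8}), and use that $\Lambda$ is g-fusion Bessel together with Theorem (\ref{th1.1}). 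For the lower bound I would replace $L_{j}$ by the factor $m^{\,2}$ using $\left\|\,L_{j}^{\,-1}\,\right\| \,\leq\, m^{\,-2}$, reducing matters to a lower estimate for $\sum_{j}\,v_{j}^{\,2}\,\left\|\,\Lambda_{j}\,P_{\,W_{j}}\,U\,P_{\,T\,W_{j}}\,f\,\right\|^{\,2}$, and then invoke invertibility of $U$ and the lower g-fusion-frame bound of $\Lambda$. Here the presence of $S_{\Gamma}^{\,-1}$ inside $U$ means the clean collapse available in Theorem (\ref{thm3}) does not occur directly, so the projections $P_{\,T\,W_{j}}$ must already be managed at this stage.

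For the reconstruction I would compute $\sum_{j}\,v_{j}^{\,2}\,P_{\,W_{j}}\,\Lambda_{j}^{\,\ast}\,\Delta_{j}\,P_{\,T\,W_{j}}$ directly. Substituting $\Delta_{j} \,=\, L_{j}\,\Lambda_{j}\,P_{\,W_{j}}\,U$ and writing $S_{\Gamma} \,=\, T\,S^{\,\prime}\,T^{\,\ast}$ with $S^{\,\prime} \,=\, \sum_{j}\,v_{j}^{\,2}\,P_{\,W_{j}}\,\Lambda_{j}^{\,\ast}\,L_{j}\,\Lambda_{j}\,P_{\,W_{j}}$ (which is immediate from (\ref{eq2.6})), one finds $U \,=\, T^{\,\ast}\,S_{\Gamma}^{\,-1}\,T \,=\, \left(\,S^{\,\prime}\,\right)^{\,-1}$, so that, were the projections $P_{\,T\,W_{j}}$ absent, the sum would telescope to $S^{\,\prime}\,\left(\,S^{\,\prime}\,\right)^{\,-1} \,=\, I_{H}$. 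The task is therefore to show that the trailing projections $P_{\,T\,W_{j}}$ can be absorbed without altering the sum, and the adjoint identity then follows automatically.

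The main obstacle — the step I would spend the most care on in both parts — is exactly this bookkeeping of the orthogonal projections $P_{\,T\,W_{j}}$. The only available tool is the first assertion of Theorem (\ref{th1}), which gives $P_{\,W_{j}}\,T^{\,\ast} \,=\, P_{\,W_{j}}\,T^{\,\ast}\,P_{\,T\,W_{j}}$, equivalently $T\,P_{\,W_{j}} \,=\, P_{\,T\,W_{j}}\,T\,P_{\,W_{j}}$; the stronger commutation $P_{\,T\,W_{j}}\,T \,=\, T\,P_{\,W_{j}}$ is available only when $T$ is unitary, which is not assumed here. Because $T$ is merely invertible, $P_{\,T\,W_{j}}$ cannot be pushed through $T$ or through $S_{\Gamma}^{\,-1}$, and the projections must be inserted and removed at precisely the positions licensed by Theorem (\ref{th1}). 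I expect this to be the delicate heart of the argument, and the place where invertibility of $T$ (rather than mere boundedness) is genuinely used; if the $P_{\,T\,W_{j}}$ cannot be absorbed termwise, the correct reading of the identities will have to route through the canonical dual reconstruction of $\Gamma$ supplied by its frame operator $S_{\Gamma}$ rather than through $\Lambda$ directly.
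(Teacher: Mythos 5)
Your outline reproduces the paper's argument almost exactly: the upper and lower frame bounds are run just as in Theorem (\ref{thm3}) using (\ref{eq2.4}), (\ref{eq2.8}), (\ref{eq2.9}), and the reconstruction telescopes through (\ref{eq2.6}) --- your observation that \,$S_{\Gamma} \,=\, T\,S^{\,\prime}\,T^{\,\ast}$\, and hence \,$U \,=\, T^{\,\ast}\,S_{\Gamma}^{\,-\,1}\,T \,=\, \left(\,S^{\,\prime}\,\right)^{\,-\,1}$\, is exactly what makes the paper's final computation \,$T^{\,-\,1}\,S_{\Gamma}\,\left(\,S_{\Gamma}^{\,-\,1}\,T\,f\,\right) \,=\, f$\, work, and your remark that the second identity is the adjoint of the first is a legitimate shortcut for the paper's ``preceding procedure.'' But the proposal is not a proof: the one step you explicitly leave open, namely the termwise absorption \,$P_{\,W_{j}}\,U\,P_{\,T\,W_{j}} \,=\, P_{\,W_{j}}\,U$, is needed in every estimate and in the reconstruction, and you never establish it. The paper closes this step with the words ``by Theorem (\ref{th1}),'' but that theorem only licenses \,$P_{\,W_{j}}\,T^{\,\ast} \,=\, P_{\,W_{j}}\,T^{\,\ast}\,P_{\,T\,W_{j}}$\, with the projection immediately adjacent to \,$T^{\,\ast}$; in \,$P_{\,W_{j}}\,U\,P_{\,T\,W_{j}} \,=\, P_{\,W_{j}}\,T^{\,\ast}\,S_{\Gamma}^{\,-\,1}\,T\,P_{\,T\,W_{j}}$\, the factor \,$S_{\Gamma}^{\,-\,1}\,T$\, intervenes, so the citation does not apply (in (\ref{eq2.6}) it does apply, because there the projections sit in precisely the licensed positions).

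Your suspicion that the projections ``cannot be absorbed termwise'' is in fact correct, and no route --- including your fallback through the canonical dual of \,$\Gamma$ --- can close the gap, because the asserted identity, and with it the reconstruction formula of the theorem, is false in general. Take \,$H \,=\, \mathbb{C}^{\,2}$, \,$J \,=\, \{\,1,\,2\,\}$, \,$W_{j} \,=\, \text{span}\,\{\,e_{j}\,\}$, \,$\Lambda_{j} \,=\, P_{\,W_{j}}$, \,$v_{j} \,=\, 1$\, (a Parseval g-fusion frame), \,$T_{j} \,=\, I_{H}$, and \,$T \,=\, \begin{pmatrix} 1 & 1 \\ 0 & 1 \end{pmatrix}$. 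Then \,$S^{\,\prime} \,=\, I_{H}$, so by (\ref{eq2.6}) \,$S_{\Gamma} \,=\, T\,T^{\,\ast}$, \,$U \,=\, I_{H}$, \,$L_{j} \,=\, I$, \,$\Delta_{j} \,=\, P_{\,W_{j}}$, while \,$T\,W_{1} \,=\, \text{span}\,\{\,e_{1}\,\}$\, and \,$T\,W_{2} \,=\, \text{span}\,\{\,(\,1,\,1\,)^{\,T}\,\}$; one computes
\[\sum\limits_{\,j \,\in\, J}\, v_{j}^{\,2}\, P_{\,W_{j}}\, \Lambda^{\,\ast}_{j}\, \Delta_{j}\,P_{\,T\,W_{j}}\,(\,e_{1}\,) \;=\; P_{\,W_{1}}\,e_{1} \,+\, P_{\,W_{2}}\,P_{\,T\,W_{2}}\,e_{1} \;=\; e_{1} \,+\, \tfrac{1}{2}\,e_{2} \;\neq\; e_{1},\]
and likewise \,$P_{\,W_{2}}\,U\,P_{\,T\,W_{2}}\,e_{1} \,=\, \tfrac{1}{2}\,e_{2} \,\neq\, 0 \,=\, P_{\,W_{2}}\,U\,e_{1}$. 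So you should not search for the missing absorption lemma: it does not exist, the paper's own proof commits exactly the illegitimate step you declined to make, and even unitarity of \,$T$\, would not restore it (a rotation in the same example also breaks the formula). What the telescoping genuinely proves is the unprojected identity \,$\sum_{j}\, v_{j}^{\,2}\, P_{\,W_{j}}\, \Lambda^{\,\ast}_{j}\, \Delta_{j} \,=\, S^{\,\prime}\,\left(\,S^{\,\prime}\,\right)^{\,-\,1} \,=\, I_{H}$, so the statement becomes true if \,$\Delta_{j}\,P_{\,T\,W_{j}}$\, is replaced by \,$\Delta_{j}$\, throughout; as written, your proposal is incomplete at the decisive step, but your diagnosis of that step is more careful than the paper's own proof.
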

       
\begin{proof}
For each \,$f \,\in\, H$, we have
\[\sum\limits_{\,j \,\in\, J}\, v_{j}^{\,2}\, \left\|\,\Delta_{j}\,P_{\,T\,W_{j}}\,(\,f\,) \,\right\|^{\,2} \,=\, \sum\limits_{\,j \,\in\, J}\, v_{j}^{\,2}\, \left\|\,L_{j}\,\Lambda_{j}\,P_{\,W_{j}}\,U\,P_{\,T\,W_{j}}\,(\,f\,) \,\right\|^{\,2}\]
\[\hspace{4cm} =\, \sum\limits_{\,j \,\in\, J}\, v_{j}^{\,2}\, \left\|\,L_{j}\,\Lambda_{j}\,P_{\,W_{j}}\,(\,U\,f\,) \,\right\|^{\,2}\; \;[\;\text{by Theorem (\ref{th1})}\;]\]
\[\hspace{1.2cm}\leq\, \sum\limits_{\,j \,\in\, J}\, v_{j}^{\,2}\, \left\|\,L_{j}\,\right\|^{\,2}\, \left\|\,\Lambda_{j}\,P_{\,W_{j}}\,(\,U\,f\,)\,\right\|^{\,2}\]
\[\hspace{3cm}\,\leq\, M^{\,4}\, \sum\limits_{\,j \,\in\, J}\, v_{j}^{\,2}\, \left\|\,\Lambda_{j}\,P_{\,W_{j}}\,(\,U\,f\,)\,\right\|^{\,2}\; \;[\;\text{using (\ref{eq2.9})}\;]\]
\[\hspace{2.9cm} \,\leq\, B\, M^{\,4}\, \left\|\,U\,f\,\right\|^{\,2}\; \;[\;\text{since}\; \,\Lambda\; \;\text{is $g$-fusion frame}\;]\]
\[\hspace{2cm}\leq\, \dfrac{B\; M^{\,4}\; \|\,T\,\|^{\,4}} {m^{\,4}\; A^{\,2}\; \left\|\,T^{\,-\, 1}\,\right\|^{\,-\, 4}}\; \|\,f\,\|^{\,2}\; \;[\;\text{by (\ref{eq2.8})}\;].\]
Since for all \,$j \,\in\, J,\, L_{j}$\, is invertible so 
\[\sum\limits_{\,j \,\in\, J}\, v_{j}^{\,2}\, \left\|\,\Delta_{j}\,P_{\,T\,W_{j}}\,(\,f\,) \,\right\|^{\,2} \,=\, \sum\limits_{\,j \,\in\, J}\, v_{j}^{\,2}\, \left\|\,L_{j}\,\Lambda_{j}\,P_{\,W_{j}}\,U\,P_{\,T\,W_{j}}\,(\,f\,) \,\right\|^{\,2}\]
\[\hspace{4.7cm}\geq\, \sum\limits_{\,j \,\in\, J}\,\dfrac{1}{\left\|\,L_{j}^{\,-\, 1}\,\right\|^{\,2}}\;v_{j}^{\,2}\; \left\|\,\Lambda_{j}\,P_{\,W_{j}}\,(\,U\,f\,) \,\right\|^{\,2}\]
\[\geq\, m^{\,2}_{\,1}\; A\; \|\,U\,f\,\|^{\,2}\;  \;[\;\;\text{taking}\;\;m_{\,1} \,=\, \inf\limits_{j \,\in\, J}\,\dfrac{1}{\left\|\,L_{j}^{\,-\, 1}\,\right\|}\;]\]
\[ \geq\, \dfrac{m^{\,2}_{\,1}\; A}{\left\|\,U^{\,-\, 1}\,\right\|^{\,2}}\, \|\,f\,\|^{\,2}\; \;[\;\text{since}\; \,U\; \;\text{is invertible}\;].\hspace{.8cm}\]
Therefore, \,$\Delta$\; is a \,$g$-fusion frame for \,$H$.\;Furthermore, for each \,$f \,\in\, H$, we have 
\[\sum\limits_{\,j \,\in\, J}\, v_{j}^{\,2}\, P_{\,W_{j}}\, \Lambda^{\,\ast}_{j}\, \Delta_{j}\,P_{\,T\,W_{j}}\,(\,f\,) \,=\, \sum\limits_{\,j \,\in\, J}\, v_{j}^{\,2}\, P_{\,W_{j}}\, \Lambda^{\,\ast}_{j}\,\left(\,L_{j}\,\Lambda_{j}\,P_{\,W_{j}}\,U\,\right)\,P_{\,T\,W_{j}}\,(\,f\,)\]
\[=\, \sum\limits_{\,j \,\in\, J}\, v_{j}^{\,2}\, P_{\,W_{j}}\, \Lambda^{\,\ast}_{j}\; T^{\,\ast}_{j}\; T_{j}\,\Lambda_{j}\,\left(\,P_{\,W_{j}}\,U\,P_{\,T\,W_{j}}\,(\,f\,)\,\right)\]
\[\hspace{1.8cm}=\, \sum\limits_{\,j \,\in\, J}\, v_{j}^{\,2}\, P_{\,W_{j}}\, \Lambda^{\,\ast}_{j}\; T^{\,\ast}_{j}\; T_{j}\,\Lambda_{j}\,P_{\,W_{j}}\,U\,(\,f\,)\;[\;\text{by Theorem (\ref{th1})}\;]\]
\[=\, \sum\limits_{\,j \,\in\, J}\, v_{j}^{\,2}\, P_{\,W_{j}}\, \Lambda^{\,\ast}_{j}\; T^{\,\ast}_{j}\; T_{j}\,\Lambda_{j}\,P_{\,W_{j}}\,\left(\,T^{\,\ast}\,S^{\,-\, 1}_{\Gamma}\,T\,f\,\right)\hspace{.2cm}\]
\[\hspace{1.9cm}=\, T^{\,-\, 1}\, \left(\,\sum\limits_{\,j \,\in\, J}\, v_{j}^{\,2}\; T\, P_{\,W_{j}}\, \Lambda^{\,\ast}_{j}\; T^{\,\ast}_{j}\; T_{j}\,\Lambda_{j}\,P_{\,W_{j}}\,T^{\,\ast}\,\left(\,S^{\,-\, 1}_{\Gamma}\,T\,f\,\right)\,\right)\] 
\[=\, T^{\,-\, 1}\, S_{\Gamma}\,\left(\,S^{\,-\, 1}_{\Gamma}\; T\,f\,\,\right) \,=\, f\; \;[\;\text{using (\ref{eq2.6})}\;].\hspace{1cm}\]According to the preceding procedure, we also get 
\[f \,=\, \sum\limits_{\,j \,\in\, J}\, v_{j}^{\,2}\, P_{\,T\,W_{j}}\, \Delta^{\,\ast}_{j}\, \Lambda_{j}\, P_{\,W_{j}}\,(\,f\,)\;\; \;\forall\; f \,\in\, H.\]This completes the proof.           
\end{proof}

%=====================================
\section{Stability of dual \,$g$-fusion frame}
%=====================================

\smallskip\hspace{.6 cm} We know that if \,$\Lambda \,=\, \left\{\,\left(\,W_{j},\, \Lambda_{j},\, v_{j}\,\right)\,\right\}_{j \,\in\, J}$\; is a g-fusion frame for \,$H$\; with associated frame operator \,$S_{\Lambda}$\, then \,$\Lambda^{\circ} \,=\, \left\{\,\left(\,S_{\Lambda}^{\,-\, 1}\,W_{j},\; \Lambda_{j}\,P_{\,W_{j}}\,S_{\Lambda}^{\,-\, 1},\; v_{j}\,\right)\,\right\}_{j \,\in\, J}$\, is called as the canonical dual g-fusion frame of \,$\Lambda$.\;For each \,$f \,\in\, H$, the frame operator \,$S_{\Lambda^{\circ}}$\, of \,$\Lambda^{\circ}$\, is described by
\[S_{\Lambda^{\circ}}\,(\,f\,) \,=\, \sum\limits_{\,j \,\in\, J}\, v_{j}^{\,2}\, P_{\,S_{\Lambda}^{\,-\, 1}\,W_{j}}\,\left(\,\Lambda_{j}\,P_{\,W_{j}}\,S_{\Lambda}^{\,-\, 1}\,\right)^{\,\ast}\,\left(\,\Lambda_{j}\,P_{\,W_{j}}\,S_{\Lambda}^{\,-\, 1}\,\right)\,P_{\,S_{\Lambda}^{\,-\, 1}\,W_{j}}\,(\,f\,)\; \;[\;\text{by (\ref{eq1.1})}\;]\]
\[=\, \sum\limits_{\,j \,\in\, J}\,v_{j}^{\,2}\,P_{\,S_{\Lambda}^{\,-\, 1}\,W_{j}}\, S_{\Lambda}^{\,-\, 1}\, P_{\,W_{j}}\, \Lambda^{\,\ast}_{j}\,\Lambda_{j}\, \left(\,P_{\,W_{j}}\,S_{\Lambda}^{\,-\, 1}\,\,P_{\,S_{\Lambda}^{\,-\, 1}\,W_{j}}\,\right)\,(\,f\,)\hspace{.5cm}\]
\[\hspace{.5cm}=\, \sum\limits_{\,j \,\in\, J}\,v_{j}^{\,2}\,\left(\,P_{\,W_{j}}\,S_{\Lambda}^{\,-\, 1}\,P_{\,S_{\Lambda}^{\,-\, 1}\,W_{j}}\,\right)^{\,\ast}\, \Lambda^{\,\ast}_{j}\,\Lambda_{j}\, \left(\,P_{\,W_{j}}\,S_{\Lambda}^{\,-\, 1}\,\,P_{\,S_{\Lambda}^{\,-\, 1}\,W_{j}}\,\right)\,(\,f\,)\]
\[=\, \sum\limits_{\,j \,\in\, J}\,v_{j}^{\,2}\, \left(\,P_{\,W_{j}}\,S_{\Lambda}^{\,-\, 1}\,\right)^{\,\ast}\, \Lambda^{\,\ast}_{j}\,\Lambda_{j}\, P_{\,W_{j}}\,S_{\Lambda}^{\,-\, 1}\,(\,f\,)\; \;[\;\text{by Theorem (\ref{th1})}\;]\]
\[=\, \sum\limits_{\,j \,\in\, J}\,v_{j}^{\,2}\, S_{\Lambda}^{\,-\, 1}\,P_{\,W_{j}}\, \Lambda^{\,\ast}_{j}\,\Lambda_{j}\, P_{\,W_{j}}\,S_{\Lambda}^{\,-\, 1}\,(\,f\,)\hspace{4.4cm}\]
\[ \,=\, S_{\Lambda}^{\,-\, 1}\,\sum\limits_{\,j \,\in\, J}\,v_{j}^{\,2}\, P_{\,W_{j}}\, \Lambda^{\,\ast}_{j}\,\Lambda_{j}\, P_{\,W_{j}}\,S_{\Lambda}^{\,-\, 1}\,(\,f\,)\hspace{4.3cm}\]
\begin{equation}\label{eq3.1}
\,=\, S_{\Lambda}^{\,-\, 1}\,\left(\,S_{\Lambda}\,\left(\,S_{\Lambda}^{\,-\, 1}\,f\,\right)\,\right) \,=\, S_{\Lambda}^{\,-\, 1}\,(\,f\,).\hspace{4.7cm}
\end{equation}          

In this section, we shall study the stability of dual\;$g$-fusion frames and at the end, a necessary and sufficient condition for some \,$K \,\in\, \mathcal{B}\,(\,H\,)$, for some invertible operator \,$U \,\in\, \mathcal{B}\,(\,H\,)$, a quotient operator will be bounded if and only if \,$g$-fusion frame becomes \,$U\,K$-$g$-fusion frame.

\begin{theorem}\label{th4}
Let \,$\Lambda \,=\, \left\{\,\left(\,W_{j},\, \Lambda_{j},\, v_{j}\,\right)\,\right\}_{j \,\in\, J}$\; and \;$\Gamma \,=\, \left\{\,\left(\,V_{j},\, \Gamma_{\,j},\, v_{j}\,\right)\,\right\}_{j \,\in\, J}$\; be two g-fusion frames for \,$H$.\;If the condition
\[\sum\limits_{\,j \,\in\, J}\,v_{j}^{\,2}\,\left\|\,\left(\,\Lambda_{j}\,P_{\,W_{j}} \,-\, \Gamma_{j}\,P_{\,V_{j}}\,\right)\,(\,f\,)\,\right\|^{\,2} \,\leq\, D\, \|\,f\,\|^{\,2}\]
holds for each \,$f \,\in\, H$\, and for some \,$D \,>\, 0$\, then \,$\sum\limits_{\,j \,\in\, J}\,v_{j}\,\left(\,P_{\,W_{j}}\,\Lambda_{j}^{\,\ast} \,-\, P_{\,V_{j}}\,\Gamma_{j}^{\,\ast}\,\right)\,f_{j}$\, converges unconditionally for each \,$\{\,f_{\,j}\,\}_{j \,\in\, J} \;\in\; l^{\,2}\left(\,\left\{\,H_{j}\,\right\}_{ j \,\in\, J}\,\right)$. 
\end{theorem}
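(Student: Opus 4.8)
The plan is to recognize the hypothesis as a g-fusion Bessel bound for the \emph{difference} family and then to deduce, by a standard analysis--synthesis duality, that the associated synthesis sum converges unconditionally.

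First I would set \,$\Theta_j \,=\, \Lambda_j\,P_{\,W_j} \,-\, \Gamma_j\,P_{\,V_j} \,\in\, \mathcal{B}\,(\,H,\, H_j\,)$\, for each \,$j \,\in\, J$. Since \,$P_{\,W_j}$\, and \,$P_{\,V_j}$\, are self-adjoint, we have \,$\Theta_j^{\,\ast} \,=\, P_{\,W_j}\,\Lambda_j^{\,\ast} \,-\, P_{\,V_j}\,\Gamma_j^{\,\ast}$, so the series under consideration is exactly \,$\sum_{j \in J}\, v_j\,\Theta_j^{\,\ast}\,f_j$. With this notation the hypothesis reads \,$\sum_{j \in J}\, v_j^{\,2}\,\|\,\Theta_j\,f\,\|^{\,2} \,\leq\, D\,\|\,f\,\|^{\,2}$\, for all \,$f \,\in\, H$, which says precisely that the map \,$f \,\mapsto\, \{\,v_j\,\Theta_j\,f\,\}_{j \in J}$\, is bounded from \,$H$\, into \,$l^{\,2}\left(\{H_j\}_{j \in J}\right)$\, with norm at most \,$\sqrt{D}$.

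Next, for any finite \,$I \,\subset\, J$, any \,$\{f_j\} \,\in\, l^{\,2}\left(\{H_j\}_{j \in J}\right)$\, and any \,$g \,\in\, H$, I would estimate, using the Cauchy--Schwarz inequality in \,$l^{\,2}$\, together with the hypothesis,
\[\left|\,\left\langle\, \sum_{j \in I}\, v_j\,\Theta_j^{\,\ast}\,f_j,\; g\,\right\rangle\,\right| \,=\, \left|\,\sum_{j \in I}\, \langle\, f_j,\; v_j\,\Theta_j\,g\,\rangle\,\right| \,\leq\, \left(\,\sum_{j \in I}\, \|\,f_j\,\|^{\,2}\,\right)^{1/2}\left(\,\sum_{j \in I}\, v_j^{\,2}\,\|\,\Theta_j\,g\,\|^{\,2}\,\right)^{1/2} \,\leq\, \sqrt{D}\;\|\,g\,\|\,\left(\,\sum_{j \in I}\, \|\,f_j\,\|^{\,2}\,\right)^{1/2}.\]
Taking the supremum over unit vectors \,$g$\, then yields the key partial-sum bound \,$\left\|\,\sum_{j \in I}\, v_j\,\Theta_j^{\,\ast}\,f_j\,\right\| \,\leq\, \sqrt{D}\,\left(\sum_{j \in I}\, \|\,f_j\,\|^{\,2}\right)^{1/2}$.

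Finally I would promote this estimate to unconditional convergence through the Cauchy criterion for the net of finite partial sums. Since \,$\{f_j\} \,\in\, l^{\,2}\left(\{H_j\}_{j \in J}\right)$, for every \,$\varepsilon \,>\, 0$\, there is a finite \,$I_0 \,\subset\, J$\, with \,$\sum_{j \notin I_0}\, \|\,f_j\,\|^{\,2} \,<\, \varepsilon^{\,2}/D$; applying the partial-sum bound to any finite \,$I \,\subset\, J \setminus I_0$\, then gives \,$\left\|\,\sum_{j \in I}\, v_j\,\Theta_j^{\,\ast}\,f_j\,\right\| \,<\, \varepsilon$, which is exactly the Cauchy condition guaranteeing unconditional convergence of \,$\sum_{j \in J}\, v_j\left(\,P_{\,W_j}\,\Lambda_j^{\,\ast} \,-\, P_{\,V_j}\,\Gamma_j^{\,\ast}\,\right)f_j$. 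The only genuine subtlety is the duality step --- passing from the scalar pairing estimate to the operator-norm bound on partial sums while keeping the weights \,$v_j$\, correctly placed; the rest is the routine Bessel-to-synthesis argument, and one could alternatively invoke Theorem (\ref{th1.1}) applied to the g-fusion Bessel family \,$\{\Theta_j\}$\, (with ambient subspaces taken to be \,$H$\, itself).
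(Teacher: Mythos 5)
Your proof is correct and takes essentially the same route as the paper: the identical Cauchy--Schwarz duality estimate yielding the partial-sum bound \(\bigl\|\sum_{j \in I} v_j\,\Theta_j^{\,\ast} f_j\bigr\| \,\leq\, \sqrt{D}\,\bigl(\sum_{j \in I} \|f_j\|^{\,2}\bigr)^{1/2}\) over finite \(I \subset J\). If anything, you improve on the paper by spelling out the final step it leaves implicit --- passing from this tail bound, via the Cauchy criterion for the net of finite partial sums, to unconditional convergence.
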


\begin{proof}
Let \,$I$\, be any finite subset of \,$J$.\;Then by Cauchy-Schwarz inequality for each \,$\{\,f_{\,j}\,\}_{j \,\in\, J} \;\in\; l^{\,2}\left(\,\left\{\,H_{j}\,\right\}_{ j \,\in\, J}\,\right)$, we have
\[\left\|\,\sum\limits_{\,j \,\in\, I}\,v_{j}\,\left(\,P_{\,W_{j}}\,\Lambda_{j}^{\,\ast} \,-\, P_{\,V_{j}}\,\Gamma_{j}^{\,\ast}\,\right)\,f_{j}\,\right\| \,=\, \sup\limits_{\|\,g\| \,=\, 1}\, \left|\,\left<\,\sum\limits_{\,j \,\in\, I}\,v_{j}\,\left(\,P_{\,W_{j}}\,\Lambda_{j}^{\,\ast} \,-\, P_{\,V_{j}}\,\Gamma_{j}^{\,\ast}\,\right)\,f_{j} \;,\; g\,\right>\,\right| \]
\[=\, \sup\limits_{\|\,g\| \,=\, 1}\,\left|\,\sum\limits_{\,j \,\in\, I}\,\left<\,f_{j} \;,\; v_{j}\,\left(\,\Lambda_{j}\,P_{\,W_{j}} \,-\, \Gamma_{j}\,P_{\,V_{j}}\,\right)\,(\,g\,)\,\right>\,\right|\hspace{1.3cm}\]
\[\hspace{2cm}\leq\, \left(\,\sum\limits_{\,j \,\in\, I}\,\left\|\,f_{j}\,\right\|^{\,2}\,\right)^{\dfrac{1}{2}}\, \sup\limits_{\|\,g\| \,=\, 1}\, \left(\,\sum\limits_{\,j \,\in\, I}\,v_{j}^{\,2}\,\left\|\,\left(\,\Lambda_{j}\,P_{\,W_{j}} \,-\, \Gamma_{j}\,P_{\,V_{j}}\,\right)\,(\,g\,)\,\right\|^{\,2}\,\right)^{\dfrac{1}{2}}\]
\[\leq\, D^{\dfrac{1}{2}}\; \left(\,\sum\limits_{\,j \,\in\, I}\,\left\|\,f_{j}\,\right\|^{\,2}\,\right)^{\dfrac{1}{2}} \,<\, \infty \hspace{4.6cm}\] and therefore \,$\sum\limits_{\,j \,\in\, J}\,v_{j}\,\left(\,P_{\,W_{j}}\,\Lambda_{j}^{\,\ast} \,-\, P_{\,V_{j}}\,\Gamma_{j}^{\,\ast}\,\right)\,f_{j}$\,  is unconditionally convergent in \,$H$.    
\end{proof}

\begin{theorem}
Let \,$\Lambda \,=\, \left\{\,\left(\,W_{j},\, \Lambda_{j},\, v_{j}\,\right)\,\right\}_{j \,\in\, J}$\; and \;$\Gamma \,=\, \left\{\,\left(\,V_{j},\, \Gamma_{j},\, v_{j}\,\right)\,\right\}_{j \,\in\, J}$\; be two g-fusion frames for \,$H$\; with frame bounds \,$(\,A_{\,1},\, B_{\,1}\,)$\; and \,$(\,A_{\,2},\, B_{\,2}\,)$, respectively.\;Take \,$\Lambda^{\circ} \,=\, \left\{\,\left(\,W_{j}^{\,\circ},\, \Lambda_{j}^{\,\circ},\, v_{j}\,\right)\,\right\}_{j \,\in\, J}$\; and \,$\Gamma^{\circ} \,=\, \left\{\,\left(\,V_{j}^{\,\circ},\, \Gamma_{j}^{\,\circ},\, v_{j}\,\right)\,\right\}_{j \,\in\, J}$\; be the corresponding canonical dual g-fusion frames for \,$\Lambda$\; and \,$\Gamma$, respectively.\;Then the following statements hold:
\begin{itemize}
\item[(I)]\hspace{.1cm} If the condition 
\[\sum\limits_{\,j \,\in\, J}\,v_{j}^{\,2}\,\left\|\,\left(\,\Lambda_{j}\,P_{\,W_{j}} \,-\, \Gamma_{j}\,P_{\,V_{j}}\,\right)\,f\,\right\|^{\,2} \,\leq\, D\, \|\,f\,\|^{\,2}\]holds for each \,$f \,\in\, H$\, and for some \,$D \,>\, 0$\, then for all \,$f \,\in\, H$,
\[\sum\limits_{\,j \,\in\, J}\,v_{j}^{\,2}\,\left\|\,\left(\,\Lambda_{j}^{\,\circ}\,P_{\,W_{j}^{\,\circ}} \,-\, \Gamma_{j}^{\,\circ}\,P_{\,V_{j}^{\,\circ}}\,\right)\,f\,\right\|^{\,2} \,\leq\, D\, \left(\,\dfrac{A_{\,1} \,+\, B_{\,1} \,+\, B_{\,1}^{\dfrac{1}{2}}\,B_{\,2}^{\dfrac{1}{2}}}{A_{\,1}\,A_{\,2}}\,\right)^{\,2}\,\|\,f\,\|^{\,2}.\]
\item[(II)]\hspace{.1cm} If the condition 
\[\left|\,\sum\limits_{\,j \,\in\, J}\,v_{j}^{\,2}\, \left\|\,\Lambda_{j}\,P_{\,W_{j}}\,(\,f\,) \,\right\|^{\,2} \,-\, \sum\limits_{\,j \,\in\, J}\,v_{j}^{\,2}\, \left\|\,\Gamma_{j}\,P_{\,V_{j}}\,(\,f\,) \,\right\|^{\,2}\,\right| \,\leq\, D\; \|\,f\,\|^{\,2}\]holds for each \,$f \,\in\, H$\, and for some \,$D \,>\, 0$\, then for all \,$f \,\in\, H$,
\[\left|\,\sum\limits_{\,j \,\in\, J}\,v_{j}^{\,2}\, \left\|\,\Lambda_{j}^{\,\circ}\,P_{\,W_{j}^{\,\circ}}\,(\,f\,) \,\right\|^{\,2} \,-\, \sum\limits_{\,j \,\in\, J}\,v_{j}^{\,2}\, \left\|\,\Gamma_{j}^{\,\circ}\,P_{\,V_{j}^{\,\circ}}\,(\,f\,) \,\right\|^{\,2}\,\right| \;\leq\; \dfrac{D}{A_{\,1}\,A_{\,2}}\;\|\,f\,\|^{\,2}.\]
\end{itemize} 
\end{theorem}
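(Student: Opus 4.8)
The plan is to reduce both statements to operator-norm estimates on the frame operators $S_\Lambda, S_\Gamma$ and their inverses. First I would record two structural facts. By the computation ending in (\ref{eq3.1}), the dual frame operators satisfy $S_{\Lambda^{\circ}} = S_\Lambda^{-1}$ and $S_{\Gamma^{\circ}} = S_\Gamma^{-1}$; moreover, applying Theorem (\ref{th1}) to the self-adjoint operators $S_\Lambda^{-1}, S_\Gamma^{-1}$ yields the simplifications $\Lambda_j^{\circ} P_{W_j^{\circ}} = \Lambda_j P_{W_j} S_\Lambda^{-1}$ and $\Gamma_j^{\circ} P_{V_j^{\circ}} = \Gamma_j P_{V_j} S_\Gamma^{-1}$ for every $j$. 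I will use throughout that $\|S_\Lambda^{-1}\| \le A_1^{-1}$ and $\|S_\Gamma^{-1}\| \le A_2^{-1}$, which come from the inequality $B^{-1}I_H \le S^{-1} \le A^{-1}I_H$ recorded in the Preliminaries.

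The heart of part (I) is a bound on $\|S_\Lambda - S_\Gamma\|$. Starting from $\langle(S_\Lambda - S_\Gamma)f, g\rangle = \sum_j v_j^2(\langle\Lambda_j P_{W_j}f, \Lambda_j P_{W_j}g\rangle - \langle\Gamma_j P_{V_j}f, \Gamma_j P_{V_j}g\rangle)$ and inserting a telescoping cross term, I would split this as $\sum_j v_j^2\langle(\Lambda_j P_{W_j} - \Gamma_j P_{V_j})f, \Lambda_j P_{W_j}g\rangle + \sum_j v_j^2\langle\Gamma_j P_{V_j}f, (\Lambda_j P_{W_j} - \Gamma_j P_{V_j})g\rangle$. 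Two applications of the Cauchy--Schwarz inequality in $\ell^{2}(\{H_j\})$, together with the hypothesis of (I) and the upper bounds $B_1, B_2$, give $|\langle(S_\Lambda - S_\Gamma)f,g\rangle| \le D^{1/2}(B_1^{1/2} + B_2^{1/2})\|f\|\,\|g\|$, hence $\|S_\Lambda - S_\Gamma\| \le D^{1/2}(B_1^{1/2}+B_2^{1/2})$. Combined with the resolvent identity $S_\Lambda^{-1} - S_\Gamma^{-1} = S_\Lambda^{-1}(S_\Gamma - S_\Lambda)S_\Gamma^{-1}$ and the two inverse bounds, this produces $\|(S_\Lambda^{-1} - S_\Gamma^{-1})f\| \le \frac{D^{1/2}(B_1^{1/2}+B_2^{1/2})}{A_1 A_2}\|f\|$.

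To finish (I), I would decompose $\Lambda_j P_{W_j}S_\Lambda^{-1} - \Gamma_j P_{V_j}S_\Gamma^{-1} = \Lambda_j P_{W_j}(S_\Lambda^{-1} - S_\Gamma^{-1}) + (\Lambda_j P_{W_j} - \Gamma_j P_{V_j})S_\Gamma^{-1}$; the choice to keep $S_\Gamma^{-1}$ (not $S_\Lambda^{-1}$) in the second summand is exactly what generates the asymmetric constant in the statement. Applying the triangle inequality for the $\ell^{2}(\{H_j\})$-norm, bounding the first summand by the upper bound $B_1$ of $\Lambda$ and the second by the hypothesis $D$ together with $\|S_\Gamma^{-1}\| \le A_2^{-1}$, and inserting the resolvent estimate above, I get $\big(\sum_j v_j^2\|(\Lambda_j^{\circ} P_{W_j^{\circ}} - \Gamma_j^{\circ} P_{V_j^{\circ}})f\|^2\big)^{1/2} \le \frac{D^{1/2}}{A_1 A_2}(A_1 + B_1 + B_1^{1/2}B_2^{1/2})\|f\|$, which squares to the claimed bound.

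Part (II) is shorter. Using (\ref{eq3.1}) once more, the two sums equal $\langle S_\Lambda^{-1}f,f\rangle$ and $\langle S_\Gamma^{-1}f,f\rangle$, while the hypothesis reads $|\langle(S_\Lambda - S_\Gamma)f,f\rangle| \le D\|f\|^2$; since $S_\Lambda - S_\Gamma$ is self-adjoint this is precisely $\|S_\Lambda - S_\Gamma\| \le D$. The same resolvent identity and the inverse bounds then give $|\langle(S_\Lambda^{-1}-S_\Gamma^{-1})f,f\rangle| \le \|S_\Lambda^{-1} - S_\Gamma^{-1}\|\,\|f\|^2 \le \frac{D}{A_1 A_2}\|f\|^2$, as required. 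I expect the main obstacle to be the central estimate of $\|S_\Lambda - S_\Gamma\|$ in part (I): getting the cross-term split right, pairing each Cauchy--Schwarz factor with the correct upper bound, and then selecting the decomposition in the previous paragraph so that the constant comes out as the stated $A_1 + B_1 + B_1^{1/2}B_2^{1/2}$ rather than its $\Lambda \leftrightarrow \Gamma$ swap. Everything else is routine operator-norm bookkeeping.
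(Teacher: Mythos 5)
Your proposal is correct and follows essentially the same route as the paper: the same telescoping split giving $\|S_{\Lambda}-S_{\Gamma}\| \,\le\, D^{1/2}\bigl(B_{1}^{1/2}+B_{2}^{1/2}\bigr)$, the same resolvent identity with $\|S_{\Lambda}^{-1}\|\le A_{1}^{-1}$, $\|S_{\Gamma}^{-1}\|\le A_{2}^{-1}$, and the identical decomposition $\Lambda_{j}P_{W_{j}}\bigl(S_{\Lambda}^{-1}-S_{\Gamma}^{-1}\bigr)+\bigl(\Lambda_{j}P_{W_{j}}-\Gamma_{j}P_{V_{j}}\bigr)S_{\Gamma}^{-1}$ with Minkowski, yielding the stated constants in both (I) and (II). The only (harmless) cosmetic differences are that you derive the norm bound on $S_{\Lambda}-S_{\Gamma}$ in weak form via Cauchy--Schwarz against a test vector $g$ where the paper invokes the synthesis-operator bound of Theorem (\ref{th1.1}) together with the estimate from Theorem (\ref{th4}), and in (II) you cite (\ref{eq3.1}) for $\sum_{j}v_{j}^{2}\|\Lambda_{j}^{\circ}P_{W_{j}^{\circ}}f\|^{2}=\langle S_{\Lambda}^{-1}f,f\rangle$ where the paper recomputes it directly.
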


\begin{proof}(I) Let \,$S_{\Lambda}$\; and \,$S_{\Gamma}$\; be the corresponding \,$g$-fusion frame operators for \,$\Lambda$\; and \,$\Gamma$, then for each \,$f \,\in\, H$, we have  
\[S_{\,\Lambda}\,f \,=\, \sum\limits_{\,j \,\in\, J}\,v_{j}^{\,2}\,P_{\,W_{j}}\,\Lambda_{j}^{\,\ast}\,\Lambda_{j}\,P_{\,W_{j}}\,(\,f\,),\;\; S_{\,\Gamma}\,f \,=\, \sum\limits_{\,j \,\in\, J}\,v_{j}^{\,2}\,P_{\,V_{j}}\,\Gamma_{j}^{\,\ast}\,\Gamma_{j}\,P_{\,V_{j}}\,(\,f\,), \;\&\] 
\begin{equation}\label{eq3.11}
B^{\,-\, 1}_{\,1}\,I_{H} \,\leq\, S^{\,-\, 1}_{\Lambda} \,\leq\, A^{\,-\, 1}_{\,1}\,I_{H},\;  \;B^{\,-\, 1}_{\,2}\,I_{H} \,\leq\, S^{\,-\, 1}_{\Gamma} \,\leq\, A^{\,-\, 1}_{\,2}\,I_{H}.
\end{equation} 
Since \,$\Lambda^{\circ}$\; and \,$\Gamma^{\circ}$\; are canonical dual\;$g$-fusion frames of \,$\Lambda$\; and \,$\Gamma$, so 
\[W_{j}^{\,\circ} \,=\, S_{\Lambda}^{\,-\, 1}\,W_{j},\;\Lambda_{j}^{\,\circ} \,=\, \Lambda_{j}\,P_{\,W_{j}}\,S_{\Lambda}^{\,-\, 1}\; \;\text{and}\; \;V_{j}^{\,\circ} \,=\, S_{\Gamma}^{\,-\, 1}\,V_{j},\; \Gamma_{j}^{\,\circ} \,=\, \Gamma_{j}\,P_{\,V_{j}}\,S_{\Gamma}^{\,-\, 1}.\]
Then for any \,$f \,\in\, H$, using Theorem (\ref{th1.1}) and the proof of Theorem (\ref{th4}), we get
\[\left\|\,S_{\Lambda}\,f \,-\, S_{\Gamma}\,f\,\right\| \,=\, \left\|\,\sum\limits_{\,j \,\in\, J}\,v_{j}^{\,2}\,\left(\,P_{\,W_{j}}\,\Lambda_{j}^{\,\ast}\,\Lambda_{j}\,P_{\,W_{j}}\,(\,f\,) \,-\, P_{\,V_{j}}\,\Gamma_{j}^{\,\ast}\,\Gamma_{j}\,P_{\,V_{j}}\,(\,f\,)\,\right)\,\right\|\]
\[\,=\, \left\|\,\sum\limits_{\,j \,\in\, J}\,v_{j}^{\,2}\,\left(\,P_{\,W_{j}}\,\Lambda_{j}^{\,\ast}\,\Lambda_{j}\,P_{\,W_{j}} \,-\, P_{\,W_{j}}\,\Lambda_{j}^{\,\ast}\,\Gamma_{j}\,P_{\,V_{j}} \,+\, P_{\,W_{j}}\,\Lambda_{j}^{\,\ast}\,\Gamma_{j}\,P_{\,V_{j}} \,-\, P_{\,V_{j}}\,\Gamma_{j}^{\,\ast}\,\Gamma_{j}\,P_{\,V_{j}}\,\right)\,(\,f\,)\,\right\|\]
\[\leq\, \left\|\,\sum\limits_{\,j \,\in\, J}\,v_{j}\;P_{\,W_{j}}\,\Lambda_{j}^{\,\ast}\;v_{j}\, \left(\,\Lambda_{j}\,P_{\,W_{j}} \,-\, \Gamma_{j}\,P_{\,V_{j}}\,\right)\,(\,f\,)\,\right\| \,+\, \left\|\,\sum\limits_{\,j \,\in\, J}\,v_{j}\,\left(\,P_{\,W_{j}}\,\Lambda_{j}^{\,\ast} \,-\, P_{\,V_{j}}\,\Gamma_{j}^{\,\ast}\,\right)\,v_{j}\,\Gamma_{j}\,P_{\,V_{j}}\,(\,f\,)\,\right\|\]
\[\leq\, B_{\,1}^{\dfrac{1}{2}}\,\left(\,\sum\limits_{\,j \,\in\, J}\,v_{j}^{\,2}\left\|\,\left(\,\Lambda_{j}\,P_{\,W_{j}} \,-\, \Gamma_{j}\,P_{\,V_{j}}\,\right)\,(\,f\,)\,\right\|^{\,2}\,\right)^{\dfrac{1}{2}} \,+\; D^{\dfrac{1}{2}}\,\left(\,\sum\limits_{\,j \,\in\, J}\,v_{j}^{\,2}\,\left\|\,\Gamma_{j}\,P_{\,V_{j}}\,(\,f\,)\,\right\|^{\,2}\,\right)^{\dfrac{1}{2}}\]
\[\leq\, B_{\,1}^{\dfrac{1}{2}}\; D^{\dfrac{1}{2}}\; \|\,f\,\| \,+\,  D^{\dfrac{1}{2}}\; B_{\,2}^{\dfrac{1}{2}}\; \|\,f\,\| \;=\; D^{\dfrac{1}{2}}\; \left(\,B_{\,1}^{\dfrac{1}{2}} \,+\, B_{\,2}^{\dfrac{1}{2}}\,\right)\; \|\,f\,\|\,.\hspace{3cm}\]
Therefore,
\[\hspace{2cm}\left\|\,S_{\Lambda} \,-\, S_{\Gamma}\,\right\| \,=\, \sup\limits_{\|\,f\,\| \,=\, 1}\,\left\|\,S_{\Lambda}\,f \,-\, S_{\Gamma}\,f\,\right\| \;\leq\; D^{\dfrac{1}{2}}\; \left(\,B_{\,1}^{\dfrac{1}{2}} \,+\, B_{\,2}^{\dfrac{1}{2}}\,\right).\] On the other hand, 
\[\left\|\,S_{\Lambda}^{\,-\, 1} \,-\, S_{\Gamma}^{\,-\, 1}\,\right\| \,=\, \left\|\,S_{\Lambda}^{\,-\, 1}\;(\,S_{\Lambda} \,-\, S_{\Gamma}\,)\;S_{\Gamma}^{\,-\, 1}\,\right\|\]
\[\hspace{3.7cm}\,\leq\, \left\|\,S_{\Lambda}^{\,-\, 1}\,\right\|\; \|\,S_{\Lambda} \,-\, S_{\Gamma}\,\|\; \left\|\,S_{\Gamma}^{\,-\, 1}\,\right\|\]
\[\hspace{4.8cm} \,\leq\; \dfrac{D^{\,\dfrac{1}{2}}}{A_{\,1}\,A_{\,2}}\;\left(\,B_{\,1}^{\dfrac{1}{2}} \,+\, B_{\,2}^{\dfrac{1}{2}}\,\right)\;[\;\text{by (\ref{eq3.11})}\;].\] Since \,$\Lambda$\; is a \,$g$-fusion frame, for \,$f \,\in\, H$, we have 
\[\sum\limits_{\,j \,\in\, J}\,v_{j}^{\,2}\, \left\|\,\Lambda_{j}\,P_{\,W_{j}}\,\left(\,S_{\Lambda}^{\,-\, 1} \,-\, S_{\Gamma}^{\,-\, 1}\,\right)\,f \,\right\|^{\,2} \,\leq\, B_{\,1}\; \left\|\,\left(\,S_{\Lambda}^{\,-\, 1} \,-\, S_{\Gamma}^{\,-\, 1}\,\right)\,f\,\right\|^{\,2}\]
\begin{equation}\label{eq3.2}
\hspace{3cm}\leq\; \dfrac{B_{\,1}}{A_{\,1}^{\,2}\,A_{\,2}^{\,2}}\;D \;\left(\,B_{\,1}^{\dfrac{1}{2}} \,+\, B_{\,2}^{\dfrac{1}{2}}\,\right)^{\,2}\,\|\,f\,\|^{\,2}.
\end{equation}
Also, by given condition, we obtain
\begin{equation}\label{eq3.3}
\sum\limits_{\,j \,\in\, J}\,v_{j}^{\,2}\left\|\,\left(\,\Lambda_{j}\,P_{\,W_{j}} \,-\, \Gamma_{j}\,P_{\,V_{j}}\,\right)\,S_{\Gamma}^{\,-\, 1}\,f\,\right\|^{\,2} \,\leq\, D\, \left\|\,S_{\Gamma}^{\,-\, 1}\,f\,\right\|^{\,2} \,\leq\, \dfrac{D}{A_{\,2}^{\,2}}\, \|\,f\,\|^{\,2}.
\end{equation}
Now, by Minkowski inequality, for each \,$f \,\in\, H$, we have
\[\sum\limits_{\,j \,\in\, J}\,v_{j}^{\,2}\,\left\|\,\left(\,\Lambda_{j}^{\,\circ}\,P_{\,W_{j}^{\,\circ}} \,-\, \Gamma_{j}^{\,\circ}\,P_{\,V_{j}^{\,\circ}}\,\right)\,(\,f\,)\,\right\|^{\,2}\]
\[=\; \sum\limits_{\,j \,\in\, J}\,v_{j}^{\,2}\,\left\|\,\left(\,\Lambda_{j}\,P_{\,W_{j}}\,S_{\Lambda}^{\,-\, 1}\;P_{\,S_{\Lambda}^{\,-\, 1}\,W_{j}} \,-\, \Gamma_{j}\,P_{\,V_{j}}\,S_{\Gamma}^{\,-\, 1}\,P_{\,S_{\Gamma}^{\,-\, 1}\,V_{j}}\,\right)\,(\,f\,)\,\right\|^{\,2}\hspace{2.5cm}\]
\[=\; \sum\limits_{\,j \,\in\, J}\,v_{j}^{\,2}\,\left\|\,\left(\,\Lambda_{j}\,P_{\,W_{j}}\,S_{\Lambda}^{\,-\, 1} \,-\, \Gamma_{j}\,P_{\,V_{j}}\,S_{\Gamma}^{\,-\, 1}\,\right)\,(\,f\,)\,\right\|^{\,2}\; \;[\;\text{by Theorem (\ref{th1})}\;] \hspace{4cm}\]
\[=\; \sum\limits_{\,j \,\in\, J}\,\left\|\,v_{j}\,\Lambda_{j}\,P_{\,W_{j}}\,\left(\,S_{\Lambda}^{\,-\, 1} \,-\, S_{\Gamma}^{\,-\, 1}\,\right)\,(\,f\,) \,+\, v_{j}\,\left(\,\Lambda_{j}\,P_{\,W_{j}} \,-\, \Gamma_{j}\,P_{\,V_{j}}\,\right)\, S_{\Gamma}^{\,-\, 1}\,(\,f\,)\,\right\|^{\,2} \hspace{4cm}\]
\[\leq\, \left(\,\left(\,\sum\limits_{\,j \,\in\, J}\,v_{j}^{\,2}\, \left\|\,\Lambda_{j}\,P_{\,W_{j}}\,\left(\,S_{\Lambda}^{\,-\, 1} \,-\, S_{\Gamma}^{\,-\, 1}\,\right)\,f\,\right\|^{\,2}\,\right)^{\,\dfrac{1}{2}} \,+\, \left(\,\sum\limits_{\,j \,\in\, J}\,v_{j}^{\,2}\left\|\,\left(\,\Lambda_{j}\,P_{\,W_{j}} \,-\, \Gamma_{j}\,P_{\,V_{j}}\,\right)\,S_{\Gamma}^{\,-\, 1}\,f\,\right\|^{\,2}\,\right)^{\,\dfrac{1}{2}}\,\right)^{\,2}\]
\[\leq\; \left(\,\dfrac{B_{\,1}^{\,\dfrac{1}{2}}}{A_{\,1}\,A_{\,2}}\; D^{\,\dfrac{1}{2}}\; \left(\,B_{\,1}^{\dfrac{1}{2}} \,+\, B_{\,2}^{\dfrac{1}{2}}\,\right) \;+\; \dfrac{D^{\,\dfrac{1}{2}}}{A_{\,2}}\,\right)^{\,2}\,\|\,f\,\|^{\,2}\; \;[\;\text{using (\ref{eq3.2}) and (\ref{eq3.3})}\;]\hspace{2cm}\]
\[=\; D\; \left(\,\dfrac{B_{\,1}^{\,\dfrac{1}{2}}}{A_{\,1}\,A_{\,2}}\; \left(\,B_{\,1}^{\dfrac{1}{2}} \,+\, B_{\,2}^{\dfrac{1}{2}}\,\right) \;+\; \dfrac{1}{A_{\,2}}\,\right)^{\,2}\,\|\,f\,\|^{\,2}\;=\; D\; \left(\,\dfrac{A_{\,1} \,+\, B_{\,1} \,+\, B_{\,1}^{\,\dfrac{1}{2}}\,B_{\,2}^{\dfrac{1}{2}}}{A_{\,1}\,A_{\,2}}\,\right)^{\,2}\,\|\,f\,\|^{\,2}.\]This completes the proof of (I).\\ 

Proof of (II).
Since \,$S_{\Lambda} \,-\, S_{\Gamma}$\, is self-adjoint so 
\[ \left\|\,S_{\Lambda} \,-\, S_{\Gamma}\,\right\| \,=\, \sup\limits_{\|\,f\,\| \,=\, 1}\,\left|\,\left<\,(\,S_{\Lambda} \,-\, S_{\Gamma}\,)\,f \;,\; f\,\right>\,\right| \,=\, \sup\limits_{\|\,f\,\| \,=\, 1}\,\left|\,\left<\,S_{\Lambda}\,f \;,\; f\,\right> \,-\, \left<\,S_{\Gamma}\,f \;,\; f \,\right>\,\right|\]
\[\hspace{2.3cm}=\; \sup\limits_{\|\,f\,\| \,=\, 1}\,\left|\,\sum\limits_{\,j \,\in\, I}\,v_{j}^{\,2}\, \left\|\,\Lambda_{j}\,P_{\,W_{j}}\,(\,f\,) \,\right\|^{\,2} \,-\, \sum\limits_{\,j \,\in\, I}\,v_{j}^{\,2}\, \left\|\,\Gamma_{j}\,P_{\,V_{j}}\,(\,f\,) \,\right\|^{\,2}\,\right| \,\leq\, D.\]
Therefore,
\[\left\|\,S_{\Lambda}^{\,-\, 1} \,-\, S_{\Gamma}^{\,-\, 1}\,\right\| \,\leq\, \left\|\,S_{\Lambda}^{\,-\, 1}\,\right\|\; \|\,S_{\Lambda} \,-\, S_{\Gamma}\,\|\; \left\|\,S_{\Gamma}^{\,-\, 1}\,\right\|\]
\begin{equation}\label{eq3.4}
\hspace{1.8cm}\leq\; \dfrac{1}{A_{\,1}}\; D\; \dfrac{1}{A_{\,2}} \;=\; \dfrac{D}{A_{\,1}\,A_{\,2}}\,.
\end{equation}
Now, for each \,$f \,\in\, H$, we have
\[\sum\limits_{\,j \,\in\, J}\,v_{j}^{\,2}\,\left\|\,\Lambda_{j}^{\,\circ}\,P_{\,W_{j}^{\,\circ}}\,(\,f\,)\,\right\|^{\,2} \,=\, \sum\limits_{\,j \,\in\, J}\,v_{j}^{\,2}\,\left\|\,\Lambda_{j}\,P_{\,W_{j}}\,S_{\Lambda}^{\,-\, 1}\;P_{\,S_{\Lambda}^{\,-\, 1}\,W_{j}}\,(\,f\,)\,\right\|^{\,2}\]
\[\hspace{5cm}\,=\,  \sum\limits_{\,j \,\in\, J}\,v_{j}^{\,2}\,\left\|\,\Lambda_{j}\,P_{\,W_{j}}\,\left(\,S_{\Lambda}^{\,-\, 1}\,f\,\right)\,\right\|^{\,2}\; \;[\;\text{by Theorem (\ref{th1})}\;]\]
\[=\; \sum\limits_{\,j \,\in\, J}\,\left<\,v_{j}^{\,2}\;\Lambda_{j}\,P_{\,W_{j}}\,\left(\,S_{\Lambda}^{\,-\, 1}\,f\,\right) \;,\; \Lambda_{j}\,P_{\,W_{j}}\,\left(\,S_{\Lambda}^{\,-\, 1}\,f\,\right)\,\right>\] 
\[=\; \left<\,\sum\limits_{\,j \,\in\, J}\,v_{j}^{\,2}\,P_{\,W_{j}}\,\Lambda_{j}^{\,\ast}\,\Lambda_{j}\,P_{\,W_{j}}\,\left(\,S_{\Lambda}^{\,-\, 1}\,f\,\right) \;,\; S_{\Lambda}^{\,-\, 1}\,f\,\right>\hspace{.3cm}\]
\[=\; \left<\,S_{\Lambda}\,\left(\,S_{\Lambda}^{\,-\, 1}\,f\,\right) \;,\; S_{\Lambda}^{\,-\, 1}\,f\,\right> \,=\, \left<\,f \;,\; S_{\Lambda}^{\,-\, 1}\,f\,\right>.\hspace{.7cm}\] Similarly it can be shown that
\[\sum\limits_{\,j \,\in\, J}\,v_{j}^{\,2}\,\left\|\,\Gamma_{j}^{\,\circ}\,P_{\,V_{j}^{\,\circ}}\,(\,f\,)\,\right\|^{\,2} \,=\, \left<\,f \;,\; S_{\Gamma}^{\,-\, 1}\,f\,\right>\;\; \;\forall\; f \,\in\, H.\]
Then for each \,$f \,\in\, H$, we have
\[\left|\,\sum\limits_{\,j \,\in\, J}\,v_{j}^{\,2}\, \left\|\,\Lambda_{j}^{\,\circ}\,P_{\,W_{j}^{\,\circ}}\,(\,f\,) \,\right\|^{\,2} \,-\, \sum\limits_{\,j \,\in\, J}\,v_{j}^{\,2}\, \left\|\,\Gamma_{j}^{\,\circ}\,P_{\,V_{j}^{\,\circ}}\,(\,f\,) \,\right\|^{\,2}\,\right|\]
\[\hspace{1.5cm}=\; \left|\,\left<\,f \;,\; S_{\Lambda}^{\,-\, 1}\,f\,\right> \,-\, \left<\,f \;,\; S_{\Gamma}^{\,-\, 1}\,f\,\right>\,\right| \,=\, \left|\,\left<\,f \;,\; \left(\,S_{\Lambda}^{\,-\, 1} \,-\, S_{\Gamma}^{\,-\, 1}\,\right)\,f\,\right>\,\right|\]
\[\leq\; \left\|\,S_{\Lambda}^{\,-\, 1} \,-\, S_{\Gamma}^{\,-\, 1}\,\right\|\; \|\,f\,\|^{\,2} \;\leq\; \dfrac{D}{A_{\,1}\,A_{\,2}}\; \|\,f\,\|^{\,2}\; \;[\;\text{by (\ref{eq3.4})}\;].\]
This completes the proof.
\end{proof}

\begin{remark}
Another representation of the statement (II) is given by,\\ if the condition 
\[\left\|\,\sum\limits_{\,j \,\in\, J}\, v_{j}^{\,2}\,\left(\,P_{\,W_{j}}\,\Lambda_{j}^{\,\ast}\;\Lambda_{j}\,P_{\,W_{j}}\,(\,f\,) \;-\; P_{\,V_{j}}\,\Gamma_{j}^{\,\ast}\;\Gamma_{j}\,P_{\,V_{j}}\,(\,f\,)\,\right)\,\right\| \,\leq\, D\; \|\,f\,\|\]holds for each \,$f \,\in\, H$\, and for some \,$D \,>\, 0$\, then for all \,$f \,\in\, H$,
\[\left\|\,\sum\limits_{\,j \,\in\, J}\, v_{j}^{\,2}\,\left(\,P_{\,W^{\,\circ}_{j}}\,(\,\Lambda_{j}^{\,\circ}\,)^{\,\ast}\;\Lambda^{\,\circ}_{j}\,P_{\,W^{\,\circ}_{j}}\,(\,f\,) \;-\; P_{\,V^{\,\circ}_{j}}\,(\,\Gamma_{j}^{\,\circ}\,)^{\,\ast}\;\Gamma^{\,\circ}_{j}\,P_{\,V^{\,\circ}_{j}}\,(\,f\,)\,\right)\,\right\| \,\leq\, \dfrac{D}{A_{\,1}\,A_{\,2}}\;\|\,f\,\|\,.\]

\begin{proof}
 In this case, we also find that  
\[ \left\|\,S_{\Lambda} \,-\, S_{\Gamma}\,\right\| \,=\, \sup\limits_{\|\,f\,\| \,=\, 1}\,\left\|\,S_{\Lambda}\,f \,-\, S_{\Gamma}\,f\,\right\|\]
\[=\, \sup\limits_{\|\,f\,\| \,=\, 1}\,\left\|\,\sum\limits_{\,j \,\in\, J}\, v_{j}^{\,2}\,\left(\,P_{\,W_{j}}\,\Lambda_{j}^{\,\ast}\;\Lambda_{j}\,P_{\,W_{j}}\,(\,f\,) \;-\; P_{\,V_{j}}\,\Gamma_{j}^{\,\ast}\;\Gamma_{j}\,P_{\,V_{j}}\,(\,f\,)\,\right)\,\right\|\]
\[ \,\leq\; \sup\limits_{\|\,f\,\| \,=\, 1}\,D\, \|\,f\,\| \,=\, D.\hspace{7.1cm}\] 
Then for each \,$f \,\in\, H$,
\[\left\|\,\sum\limits_{\,j \,\in\, I}\, v_{j}^{\,2}\,\left(\,P_{\,W^{\,\circ}_{j}}\,(\,\Lambda_{j}^{\,\circ}\,)^{\,\ast}\;\Lambda^{\,\circ}_{j}\,P_{\,W^{\,\circ}_{j}}\,(\,f\,) \;-\; P_{\,V^{\,\circ}_{j}}\,(\,\Gamma_{j}^{\,\circ}\,)^{\,\ast}\;\Gamma^{\,\circ}_{j}\,P_{\,V^{\,\circ}_{j}}\,(\,f\,)\,\right)\,\right\|\]
\[=\, \left\|\,S_{\Lambda^{\circ}}\,(\,f\,) \,-\, S_{\Gamma^{\circ}}\,(\,f\,)\,\right\| \,=\, \left\|\,S_{\Lambda}^{\,-\, 1}\,f \,-\, S_{\Gamma}^{\,-\, 1}\,f\,\right\|\; \;[\;\text{using (\ref{eq3.1})}\;]\]
\[\leq\, \left\|\,S_{\Lambda}^{\,-\, 1}\, \,-\, S_{\Gamma}^{\,-\, 1}\,\right\|\, \|\,f\,\| \,\leq\, \dfrac{D}{A_{\,1}\,A_{\,2}}\; \|\,f\,\|\; \;[\;\text{using (\ref{eq3.4})}\;].\hspace{1cm}\]
\end{proof}   
\end{remark}

\begin{theorem}
Let \,$K \,\in\, \mathcal{B}\,(\,H\,)$\; and \,$\Lambda \,=\, \left\{\,\left(\,W_{j},\, \Lambda_{j},\, v_{j}\,\right)\,\right\}_{j \,\in\, J}$\; be a K-g-fusion frame for \,$H$\; with frame operator \;$S_{\Lambda}$.\;Let \,$U \,\in\, \mathcal{B}\,(\,H\,)$\; be an invertible operator on \,$H$.\;Then the following statements are equivalent:
\begin{itemize}
\item[(I)]\hspace{.2cm} $\Gamma \,=\, \left\{\,\left(\,U\,W_{\,j},\, \Lambda_{\,j}\,P_{\,W_{\,j}}\,U^{\,\ast},\, v_{\,j}\,\right)\,\right\}_{j \,\in\, J}$\; is a U\,K-g-fusion frame. 
\item[(II)]\hspace{.2cm} The quotient operator \,$\left[\,\left(\,U\,K\,\right)^{\,\ast} \;/\; S_{\Lambda}^{\,\dfrac{1}{2}}\;U^{\,\ast} \,\right]$\; is bounded.
\item[(III)]\hspace{.2cm} The quotient operator \,$\left[\,\left(\,U\,K\,\right)^{\,\ast} \;/\; \left(\,U\,S_{\Lambda}\,U^{\,\ast}\,\right)^{\,\dfrac{1}{2}}\,\right]$\; is bounded.
\end{itemize}
\end{theorem}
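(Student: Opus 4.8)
The plan is to show that all three statements collapse to a single norm inequality: the existence of a constant $C>0$ with $\left\|\,(UK)^{\ast}f\,\right\| \,\leq\, C\,\left\|\,S_{\Lambda}^{1/2}\,U^{\ast}f\,\right\|$ for every $f \in H$. First I would simplify the frame expression for $\Gamma$. Since $U$ is invertible, each $U\,W_{j}$ is a closed subspace, so $\overline{U\,W_{j}} \,=\, U\,W_{j}$, and Theorem (\ref{th1}) applied with $T \,=\, U$ and $V \,=\, W_{j}$ yields $P_{\,W_{j}}\,U^{\ast} \,=\, P_{\,W_{j}}\,U^{\ast}\,P_{\,U\,W_{j}}$. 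Hence $\Lambda_{j}\,P_{\,W_{j}}\,U^{\ast}\,P_{\,U\,W_{j}}\,(f) \,=\, \Lambda_{j}\,P_{\,W_{j}}\,U^{\ast}\,(f)$, and the central sum in the defining inequality of $\Gamma$ reduces, via the frame operator identity $\left<\,S_{\Lambda}\,g,\,g\,\right> \,=\, \sum_{j} v_{j}^{2}\left\|\,\Lambda_{j}\,P_{\,W_{j}}\,g\,\right\|^{2}$ with $g \,=\, U^{\ast}f$ and the positivity and self-adjointness of $S_{\Lambda}$ and $U\,S_{\Lambda}\,U^{\ast}$, to
\[\sum\limits_{\,j \,\in\, J}\, v_{j}^{\,2}\,\bigl\|\,\Lambda_{j}\,P_{\,W_{j}}\,U^{\ast}f\,\bigr\|^{\,2} \;=\; \bigl<\,S_{\Lambda}\,U^{\ast}f,\,U^{\ast}f\,\bigr> \;=\; \bigl\|\,S_{\Lambda}^{1/2}\,U^{\ast}f\,\bigr\|^{\,2} \;=\; \bigl\|\,(\,U\,S_{\Lambda}\,U^{\ast}\,)^{1/2}f\,\bigr\|^{\,2}.\]

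To establish (I)\,$\Leftrightarrow$\,(II), I would first observe that the upper bound for $\Gamma$ is automatic, because the central sum equals $\left<\,S_{\Lambda}\,U^{\ast}f,\,U^{\ast}f\,\right> \,\leq\, \|\,S_{\Lambda}\,\|\,\|\,U\,\|^{2}\,\|\,f\,\|^{2}$. Therefore $\Gamma$ is a $U\,K$-$g$-fusion frame if and only if a lower bound $A^{\prime}\,\left\|\,(UK)^{\ast}f\,\right\|^{2} \,\leq\, \left\|\,S_{\Lambda}^{1/2}\,U^{\ast}f\,\right\|^{2}$ holds for some $A^{\prime} \,>\, 0$, that is, $\left\|\,(UK)^{\ast}f\,\right\| \,\leq\, (A^{\prime})^{-1/2}\left\|\,S_{\Lambda}^{1/2}\,U^{\ast}f\,\right\|$. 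By the characterization of boundedness of a quotient operator (a Douglas-type criterion: $[\,A\,/\,B\,]$ is bounded exactly when $\left\|\,Af\,\right\| \,\leq\, C\,\left\|\,Bf\,\right\|$ for all $f$), this is precisely the assertion that $\left[\,(UK)^{\ast}\,/\,S_{\Lambda}^{1/2}\,U^{\ast}\,\right]$ is bounded. I would also record that this quotient operator is automatically well defined: if $S_{\Lambda}^{1/2}\,U^{\ast}f \,=\, 0$ then the central sum vanishes, and the lower $K$-$g$-fusion frame bound of $\Lambda$ applied to $U^{\ast}f$ forces $(UK)^{\ast}f \,=\, K^{\ast}\,U^{\ast}f \,=\, 0$, so $\mathcal{N}\,(\,S_{\Lambda}^{1/2}\,U^{\ast}\,) \,\subset\, \mathcal{N}\,(\,(UK)^{\ast}\,)$ comes for free.

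For (II)\,$\Leftrightarrow$\,(III), I would invoke the pointwise identity $\left\|\,S_{\Lambda}^{1/2}\,U^{\ast}f\,\right\| \,=\, \left\|\,(\,U\,S_{\Lambda}\,U^{\ast}\,)^{1/2}f\,\right\|$ already obtained above. Since the two candidate ``denominators'' agree in norm at every $f$, the inequality $\left\|\,(UK)^{\ast}f\,\right\| \,\leq\, C\,\left\|\,S_{\Lambda}^{1/2}\,U^{\ast}f\,\right\|$ holds if and only if $\left\|\,(UK)^{\ast}f\,\right\| \,\leq\, C\,\left\|\,(\,U\,S_{\Lambda}\,U^{\ast}\,)^{1/2}f\,\right\|$ holds, with the same constant and the same null-space inclusion. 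Hence the quotient operator in (II) is bounded exactly when the one in (III) is, completing the cycle of equivalences.

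The conceptual heart of the argument is the chain of identities in the first step, which collapses the frame sum of $\Gamma$ into the squared norm of a single operator evaluated at $f$; once that is in place, the remaining equivalences are merely the translation between a lower frame bound and a quotient-operator norm inequality. I do not expect a genuine obstacle here: the only point needing attention is the well-definedness (null-space inclusion) of each quotient operator, and as noted this follows automatically from the lower bound of the original $K$-$g$-fusion frame, so it requires no separate hypothesis.
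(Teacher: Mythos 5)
Your proposal is correct and, at its core, takes the same route as the paper: both rest on the identity chain $\sum_{j\in J} v_j^{2}\,\bigl\|\Lambda_j P_{W_j}U^{\ast}P_{U W_j}f\bigr\|^{2} \,=\, \bigl<S_{\Lambda}U^{\ast}f,\,U^{\ast}f\bigr> \,=\, \bigl\|S_{\Lambda}^{1/2}U^{\ast}f\bigr\|^{2} \,=\, \bigl\|(U S_{\Lambda}U^{\ast})^{1/2}f\bigr\|^{2}$ (via Theorem \ref{th1}) together with reading quotient-operator boundedness as the norm inequality $\|(UK)^{\ast}f\| \,\leq\, C\,\|S_{\Lambda}^{1/2}U^{\ast}f\|$, and the paper's cyclic implications (I)$\Rightarrow$(II)$\Rightarrow$(III)$\Rightarrow$(I) carry out exactly the computations you package as a single equivalence hub. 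Your two minor refinements --- deriving the null-space inclusion once and for all from the lower bound of $\Lambda$ itself, so both quotient operators are well defined independently of which statement holds, and observing that the upper frame bound for $\Gamma$ is automatic --- are sound and merely streamline the same argument.
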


\begin{proof}$(I) \,\Rightarrow\, (II)$\, Since \,$\Gamma$\; is a \,$U\,K$-$g$-fusion frame, \,$\exists\, \,A,\, B \,>\, 0$\, such that for all \,$f \,\in\, H$, we have
\begin{equation}\label{eq3.5} 
A\, \left\|\,\left(\,U\,K\,\right)^{\,\ast}\,f\,\right\|^{\,2} \,\leq\, \sum\limits_{\,j \,\in\, J}\,v_{j}^{\,2}\, \left\|\,\Lambda_{j}\,P_{\,W_{j}}\,U^{\,\ast}\,P_{\,U\,W_{j}}\,(\,f\,) \,\right\|^{\,2} \,\leq\, B\, \|\,f\,\|^{\,2}.
\end{equation}
By Theorem (\ref{th1}), for each \,$f \,\in\, H$, we obtain
\[\sum\limits_{\,j \,\in\, J}\,v_{j}^{\,2}\, \left\|\,\Lambda_{j}\,P_{\,W_{j}}\,U^{\,\ast}\,P_{\,U\,W_{j}}\,(\,f\,) \,\right\|^{\,2} \,=\, \sum\limits_{\,j \,\in\, J}\,v_{j}^{\,2}\, \left\|\,\Lambda_{j}\,P_{\,W_{j}}\,(\,U^{\,\ast}\,f\,) \,\right\|^{\,2}\]
\[\hspace{3cm}=\; \left<\,S_{\Lambda}\,\left(\,U^{\,\ast}\,f\,\right) \;,\; U^{\,\ast}\,f\,\right> \,=\, \left\|\,S_{\Lambda}^{\,\dfrac{1}{2}}\;\left(\,U^{\,\ast}\,f\,\right)\,\right\|^{\,2}\]and therefore from (\ref{eq3.5}),
\begin{equation}\label{eq3.6}
 A\; \left\|\,\left(\,U\,K\,\right)^{\,\ast}\;f\,\right\|^{\,2} \;\leq\; \left\|\,S_{\Lambda}^{\,\dfrac{1}{2}}\;\left(\,U^{\,\ast}\,f\,\right)\,\right\|^{\,2}.
\end{equation}
Let us define the operator \,$T \,:\, \mathcal{R}\,\left(\,S_{\Lambda}^{\,\dfrac{1}{2}}\;U^{\,\ast}\,\right) \,\to\, \mathcal{R}\,\left(\,(\,U\,K\,)^{\,\ast}\,\right)$\, by 
\[T\,\left(\,S_{\Lambda}^{\,\dfrac{1}{2}}\;U^{\,\ast}\,f\,\right) \,=\, (\,U\,K\,)^{\,\ast}\,f\;\; \;\forall\; f \,\in\, H.\]
Then it can be easily verify that \,$T$\, is a linear operator and \,$\mathcal{N}\,\left(\,S_{\Lambda}^{\,\dfrac{1}{2}}\;U^{\,\ast}\,\right) \,\subset\, \mathcal{N}\,\left(\,\left(\,U\,K\,\right)^{\,\ast}\,\right)$.\;Thus \,$T$\, is well-defined quotient operator.\;Also for each \,$f \,\in\, H$,
\[\left\|\,T\,\left(\,S_{\Lambda}^{\,\dfrac{1}{2}}\;U^{\,\ast}\,f\,\right)\,\right\| \,=\, \left\|\,(\,U\,K\,)^{\,\ast}\,f\,\right\| \,\leq\, \dfrac{1}{\sqrt{A}} \;\left\|\,S_{\Lambda}^{\,\dfrac{1}{2}}\;\left(\,U^{\,\ast}\,f\,\right)\,\right\|\; \;[\;\text{using (\ref{eq3.6})}\;]\]and hence \,$T$\; is bounded.

$(II) \,\Rightarrow\, (III)$\; Suppose the quotient operator \,$\left[\,\left(\,U\,K\,\right)^{\,\ast} \;/\; S_{\Lambda}^{\,\dfrac{1}{2}}\;U^{\,\ast} \,\right]$\; is bounded. Then for each \,$f \,\in\, H, \,\exists\; B \,>\, 0$\; such that 
\[\left\|\,\left(\,U\,K\,\right)^{\,\ast}\,f\,\right\|^{\,2} \,\leq\, B\, \left\|\,S_{\Lambda}^{\,\dfrac{1}{2}}\,\left(\,U^{\,\ast}\,f\,\right)\,\right\|^{\,2} \,=\, B\, \left<\,S_{\Lambda}\,\left(\,U^{\,\ast}\,f\,\right) \,,\, U^{\,\ast}\,f\,\right> \,=\, B\, \left<\,U\,S_{\Lambda}\,U^{\,\ast}\,f \,,\, f\,\right>\]
\[\hspace{5cm}\,=\, B\; \left\|\,\left(\,U\,S_{\Lambda}\,U^{\,\ast}\,\right)^{\dfrac{1}{2}}\,f\,\right\|^{\,2}\; \;[\;\text{since}\; \,U\,S_{\Lambda}\,U^{\,\ast}\; \;\text{is self-adjoint}\;].\]Hence, the quotient operator \;$\left[\,\left(\,U\,K\,\right)^{\,\ast} \;/\; \left(\,U\,S_{\Lambda}\,U^{\,\ast}\,\right)^{\,\dfrac{1}{2}}\,\right]$\; is bounded.

$(III) \,\Rightarrow\, (I)$\; Suppose that the quotient operator \;$\left[\,\left(\,U\,K\,\right)^{\,\ast} \;/\; \left(\,U\,S_{\Lambda}\,U^{\,\ast}\,\right)^{\,\dfrac{1}{2}}\,\right]$\; is bounded.\;Then for each \,$f \,\in\, H$, \,$\exists\, \,B \,>\, 0$\; such that
\begin{equation}\label{eq3.7} 
\left\|\,\left(\,U\,K\,\right)^{\,\ast}\;f\,\right\|^{\,2} \;\leq\; B\; \left\|\,\left(\,U\,S_{\Lambda}\,U^{\,\ast}\,\right)^{\dfrac{1}{2}}\,f\,\right\|^{\,2}.
\end{equation}
Now, by Theorem (\ref{th1}), for each \,$f \,\in\, H$, we have
\[\sum\limits_{\,j \,\in\, J}\,v_{j}^{\,2}\, \left\|\,\Lambda_{j}\,P_{\,W_{j}}\,U^{\,\ast}\,P_{\,U\,W_{j}}\,(\,f\,) \,\right\|^{\,2} \;=\; \sum\limits_{\,j \,\in\, J}\,v_{j}^{\,2}\, \left\|\,\Lambda_{j}\,P_{\,W_{j}}\,(\,U^{\,\ast}\,f\,) \,\right\|^{\,2}\]
\[=\; \left<\,S_{\Lambda}\,\left(\,U^{\,\ast}\,f\,\right) \;,\; U^{\,\ast}\,f\,\right> \,=\, \left\|\,\left(\,U\,S_{\Lambda}\,U^{\,\ast}\,\right)^{\dfrac{1}{2}}\,f\,\right\|^{\,2} \,\geq\; \dfrac{1}{B}\; \left\|\,\left(\,U\,K\,\right)^{\,\ast}\;f\,\right\|^{\,2}\; \;[\;\text{by (\ref{eq3.7})}\;]\,.\]
Also, since \,$\Lambda$\; is a \,$K$-$g$-fusion frame, \,$\exists$\, \,$C \,>\, 0$\, such that
\[\sum\limits_{\,j \,\in\, J}\,v_{j}^{\,2}\, \left\|\,\Lambda_{j}\,P_{\,W_{j}}\,U^{\,\ast}\,P_{\,U\,W_{j}}\,(\,f\,) \,\right\|^{\,2} \,=\, \sum\limits_{\,j \,\in\, J}\,v_{j}^{\,2}\, \left\|\,\Lambda_{j}\,P_{\,W_{j}}\,(\,U^{\,\ast}\,f\,) \,\right\|^{\,2}\]
\[\hspace{5cm}\leq\; C\; \left\|\,U^{\,\ast}\,f\,\right\|^{\,2} \;\leq\; C\; \|\,U\,\|^{\,2}\; \|\,f\,\|^{\,2}\; \;\forall\; f \,\in\, H.\]
Hence, \,$\Gamma$\; is a \,$U\,K$-$g$-fusion frame.\;This completes the proof. 
\end{proof}

\end{document}